\newcommand{\R}[0]{\mathbb R}
\newcommand{\Ds}[0]{\mathcal D}
\newtheorem{Th}{Theorem}[section]
\newtheorem{Lemma}{Lemma}[section]
\newtheorem{Prop}[Lemma]{Proposition}
\newtheorem{Coro}[Th]{Corollary}
\newtheorem{Def}{Definition}[section]
\begin{document}

\title{On the regularity of the solution map of the incompressible Euler equation}
\author{H. Inci}

\maketitle

\begin{abstract}
In this paper we consider the incompressible Euler equation on the Sobolev space $H^s(\R^n)$, $s > n/2+1$, and show that for any $T > 0$ its solution map $u_0 \mapsto u(T)$, mapping the initial value to the value at time $T$, is nowhere locally uniformly continuous and nowhere differentiable.
\end{abstract}

\section{Introduction}\label{section_introduction}

The initial value problem for the incompressible Euler equation in $\R^n$, $n \geq 2$, reads as:
\begin{eqnarray}
\nonumber
\partial_t u + (u \cdot \nabla) u &=& -\nabla p \\
\label{E}
\operatorname{div} u &=& 0 \\
\nonumber
u(0)&=& u_0
\end{eqnarray}
where $u(t,x)=\big(u_1(t,x),\ldots,u_n(t,x)\big)$ is the velocity of the fluid at time $t \in \R$ and position $x \in \R^n$, $u \cdot \nabla = \sum_{k=1}^n u_k \partial_k$ acts componentwise on $u$, $\nabla p$ is the gradient of the pressure $p(t,x)$, $\operatorname{div} u=\sum_{k=1}^n \partial_k u_k$ is the divergence of $u$ and $u_0$ is the value of $u$ at time $t=0$ (with assumption $\operatorname{div} u_0 = 0$). The system \eqref{E} (going back to Euler \cite{euler}) describes a fluid motion without friction. The first equation in \eqref{E} reflects the conservation of momentum. The second equation in \eqref{E} says that the fluid motion is incompressible, i.e. that the volume of any fluid portion remains constant during the flow.\\
The unknowns in \eqref{E} are $u$ and $p$. But one can express $\nabla p$ in terms of $u$ -- see Inci \cite{lagrangian}. Thus the evolution of system \eqref{E} is completely described by $u$. \\ \\
To state the result of this paper we have to introduce some notation. For $s \in \R_{\geq 0}$ we denote by $H^s(\R^n)$ the Hilbert space of real valued functions on $\R^n$ of Sobolev class $s$, by $H^s(\R^n;\R^n)$ the vector fields on $\R^n$ of Sobolev class $s$ and by $H^s_\sigma(\R^n;\R^n) \subseteq H^s(\R^n;\R^n)$ the closed subspace consisting of divergence-free vector fields -- see Adams \cite{adams} or Inci, Topalov, Kappeler \cite{composition} for details on Sobolev spaces. In particular we will often need the fact that for $n \geq 1$, $s > n/2$ and $0 \leq s' \leq s$ multiplication
\begin{equation}\label{multiplication}
 H^s(\R^n) \times H^{s'}(\R^n) \to H^{s'}(\R^n),\quad (f,g) \mapsto f \cdot g
\end{equation}
is a continuous bilinear map.\\
The notion of solution for \eqref{E} we are interested in are solutions which lie in $C^0\big([0,T];H^s(\R^n;\R^n)\big)$ for some $T > 0$ and $s > n/2+1$. This is the space of continuous curves on $[0,T]$ with values in $H^s(\R^n;\R^n)$. To be precise we say that $u,\nabla p \in C^0\big([0,T];H^s(\R^n;\R^n)\big)$ is a solution to \eqref{E} if
\begin{equation}\label{RE}
 u(t) = u_0 + \int_0^t -(u(\tau) \cdot \nabla) u(\tau) - \nabla p(\tau) \;d\tau \quad \forall 0 \leq t \leq T
\end{equation}
and $\operatorname{div} u(t)=0$ for all $0 \leq t \leq T$ holds. As $s-1>n/2$ we know by the Banach algebra property of $H^{s-1}(\R^n)$ that the integrand in \eqref{RE} lies in $C^0\big([0,T];H^{s-1}(\R^n;\R^n)\big)$. Due to the Sobolev imbedding and the fact $s > n/2+1$ the solutions considered here are $C^1$ (in the $x$-variable slightly better than $C^1$) and are thus solutions for which the derivatives appearing in \eqref{E} are classical derivatives. For this kind of solutions we have the following well-posedness result (it is here stated in a form which will be convenient later):
\begin{Th}[Kato \cite{kato}]\label{th_kato}
Let $n \geq 2$, $s > n/2+1$ and $T > 0$. Then there is an open maximal (with respect to inclusion) neighborhood $U_T \subseteq H^s_\sigma(\R^n;\R^n)$ of $0$ such that there is a unique solution $u \in C^0\big([0,T];H^s_\sigma(\R^n;\R^n)$ of \eqref{E} for all $u_0 \in U_T$. Moreover the solution map
\[
 E_T:U_T \to H^s_\sigma(\R^n;\R^n),\quad u_0 \mapsto u(T)
\]
is continuous.
\end{Th}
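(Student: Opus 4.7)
The plan is to reformulate \eqref{E} as a smooth ODE on a Banach manifold by passing to Lagrangian coordinates, following the approach of \cite{lagrangian}. To a solution $u$ of \eqref{E} on $[0,T]$ one associates the particle flow $\phi:[0,T] \to \D^s_\mu$, where $\D^s_\mu$ denotes the group of volume-preserving $H^s$-diffeomorphisms of $\R^n$, defined by $\dot\phi(t) = u(t) \circ \phi(t)$ and $\phi(0) = \id$. For $s > n/2+1$ this group is a $C^\infty$ Hilbert manifold (see \cite{composition}), and a direct computation shows that \eqref{E} is equivalent to a second-order equation $\ddot\phi = F(\phi,\dot\phi)$ on the tangent bundle $T\D^s_\mu$, with $F$ of class $C^\infty$. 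The essential point is that this reformulation eliminates the apparent loss of derivative in \eqref{E}: the pressure contribution, when conjugated by $\phi$, becomes a smooth function of $\phi$.

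With this reformulation in hand, the Picard--Lindel\"of theorem on Banach manifolds provides, for every initial datum $(\id,u_0)$ with $u_0 \in H^s_\sigma(\R^n;\R^n)$, a unique local solution $\phi(\cdot;u_0)$ depending continuously on $u_0$. I would then define
\[
 U_T := \{ u_0 \in H^s_\sigma(\R^n;\R^n) \mid \phi(\cdot;u_0) \text{ exists on all of } [0,T] \}.
\]
Standard ODE arguments (lower semicontinuity of the maximal time of existence) show that $U_T$ is open; it contains $0$ since $u_0 = 0$ yields the stationary solution $\phi \equiv \id$, and it is maximal by construction. Translating back via $u(t) = \dot\phi(t) \circ \phi(t)^{-1}$ and using continuity of composition and inversion on $\D^s_\mu$ -- both of which rely on the embedding $H^s \hookrightarrow C^1$ afforded by $s > n/2+1$ -- yields a unique Eulerian solution $u \in C^0([0,T];H^s_\sigma(\R^n;\R^n))$ for each $u_0 \in U_T$, and establishes the continuity of $E_T:U_T \to H^s_\sigma(\R^n;\R^n)$.

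Uniqueness at the Eulerian level, which is needed to identify $U_T$ as the \emph{maximal} open neighborhood with the solvability property, would be established by a separate Gronwall estimate: for two solutions $u,v \in C^0([0,T];H^s_\sigma)$ with $u(0)=v(0)$, one controls $\|u(t)-v(t)\|_{L^2}$ by exploiting the antisymmetry of the transport term after projection onto divergence-free vector fields and the uniform $L^\infty$-bound on $\nabla u, \nabla v$ coming from $s > n/2+1$. This bound also shows that any Eulerian solution produces, by integration of the ODE $\dot\phi = u \circ \phi$, a flow $\phi \in C^0([0,T];\D^s_\mu)$, so the two viewpoints are equivalent.

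The main obstacle is verifying that the vector field $F$ on $T\D^s_\mu$ is genuinely of class $C^1$ (indeed $C^\infty$). This reduces to showing that the Helmholtz projector, conjugated by an $H^s$-diffeomorphism $\phi$, depends smoothly on $\phi$ despite being a nonlocal Fourier multiplier; this, together with the smoothness of composition with a fixed $H^s$-map, is the technical heart of the Lagrangian formulation treated in \cite{lagrangian}. Once these regularity results are accepted, the statement of the theorem reduces to standard ODE theory on Banach manifolds.
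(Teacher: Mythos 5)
The paper does not actually prove this theorem: it is quoted as a known well-posedness result and attributed to Kato \cite{kato}, whose original argument is Eulerian, proceeding via energy estimates and an approximation (vanishing viscosity / quasi-linear hyperbolic theory) rather than any geometric reformulation. Your proposal instead follows the Ebin--Marsden/Lagrangian route: rewrite \eqref{E} as a second-order ODE with smooth right-hand side on the diffeomorphism group, solve by Picard--Lindel\"of, and push back to Eulerian variables. This is a legitimate and essentially complete outline, and it is in fact the route most compatible with the rest of the paper: Proposition \ref{prop_exp} (citing \cite{lagrangian}) is precisely the statement that the Lagrangian ODE has a real analytic time-$t$ flow, and the paper's whole strategy rests on the contrast between the smooth Lagrangian exponential map and the merely continuous Eulerian solution map. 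Your sketch correctly isolates the two points where regularity is delicate: the smoothness of the conjugated pressure/Helmholtz term (deferred to \cite{lagrangian}), and the fact that the return map $u(t)=\dot\varphi(t)\circ\varphi(t)^{-1}$ uses only the \emph{continuity} of composition and inversion on $\Ds^s(\R^n)$, which is exactly why one obtains a continuous but not smooth $E_T$.

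Two caveats. First, on the non-compact space $\R^n$ the claim that the volume-preserving $H^s$-diffeomorphisms form a smooth Hilbert submanifold needs justification; the paper sidesteps this by defining $\exp$ on a neighborhood of $0$ in all of $H^s(\R^n;\R^n)$ and imposing $\operatorname{div} u_0=0$ only on the initial data, and you could do the same. Second, identifying your Lagrangian-defined $U_T$ with the maximal open set of Eulerian solvability requires the equivalence of the two formulations in both directions (every Eulerian solution integrates to a flow in $C^1\big([0,T];\Ds^s(\R^n)\big)$, as recorded in Section \ref{section_lagrangian}) together with Eulerian uniqueness; you flag both, so the argument is sound modulo the cited technical inputs.
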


With this we can state the main results of this paper.

\begin{Th}\label{th_nonuniform}
Let $n \geq 2$, $s > n/2+1$ and $T > 0$. Then the solution map $E_T:U_T \to H^s_\sigma(\R^n;\R^n)$ is nowhere locally uniformly continuous.
\end{Th}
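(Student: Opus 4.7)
The plan is to exploit the Lagrangian formulation of \eqref{E} from \cite{lagrangian}, which gives the factorization
\[
E_T(u_0') = V_T(u_0') \circ \Phi_T(u_0')^{-1}
\]
for every $u_0' \in U_T$, where $\Phi_T : U_T \to \mathcal{D}^s_\mu(\Rn)$ takes values in the group of volume-preserving $H^s$-diffeomorphisms and $V_T := \dot\Phi_T : U_T \to H^s(\Rn;\Rn)$ is its time derivative, both being real-analytic. The nowhere-local-uniform-continuity of $E_T$ will then reflect the failure of right composition $(f,\eta) \mapsto f \circ \eta^{-1}$ on $H^s \times \mathcal{D}^s_\mu$ to be locally uniformly continuous: composition is Lipschitz in $\eta$ only when one controls one \emph{extra} derivative of $f$, so small $H^s$-perturbations of $\eta$ produce order-one $H^s$-differences as soon as $f$ carries enough high-frequency content.

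Fix $u_0 \in U_T$ and an open ball $B \subseteq U_T$ around it. I would construct sequences $v_k, w_k \in B$ with $\|v_k - w_k\|_{H^s} \to 0$ but $\|E_T(v_k) - E_T(w_k)\|_{H^s}$ bounded below. Take $v_k = u_0 + \phi_k$ and $w_k = u_0 + \psi_k$, where $\phi_k, \psi_k$ are divergence-free vector fields oscillating at spatial frequency $k$ and differing only by a phase shift of size $1/k$---concretely the curls of $k^{-s-1} a(x) \cos(k x_1)$ and $k^{-s-1} a(x) \cos(k x_1 + 1/k)$ for a fixed compactly supported bump $a$, mildly rescaled so that $v_k, w_k \in B$. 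Then $\|\phi_k\|_{H^s}, \|\psi_k\|_{H^s} \sim 1$ while $\|\phi_k - \psi_k\|_{H^s} \sim 1/k \to 0$. Decomposing
\[
E_T(v_k) - E_T(w_k) = \bigl(V_T(v_k) - V_T(w_k)\bigr)\circ\Phi_T(v_k)^{-1} + V_T(w_k)\circ\Phi_T(v_k)^{-1} - V_T(w_k)\circ\Phi_T(w_k)^{-1},
\]
the first summand is $O(1/k)$ in $H^s$ by real-analyticity of $V_T$ and $\Phi_T$. For the second, Taylor expansion at $u_0$ shows that $V_T(w_k)$ inherits frequency-$k$ oscillation of amplitude $\sim k^{-s}$ from $\psi_k$ at leading order (the linearised Euler flow transports high frequencies without damping, its nonlocal pressure correction being lower order at frequency $k$), so $\nabla V_T(w_k)$ has $L^\infty$-size of order $k^{1-s}$ concentrated at frequency $k$; combining with $\|\Phi_T(v_k)^{-1} - \Phi_T(w_k)^{-1}\|_{H^s} \sim 1/k$ via a fundamental-theorem-of-calculus estimate yields an $H^s$-contribution of order one.

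The main obstacle will be making the heuristic frequency count rigorous: one must verify that the linearised maps $dV_T(u_0)$ and $d\Phi_T(u_0)$ do not annihilate the high-frequency content of $\phi_k, \psi_k$ (this follows from the transport structure of the linearised Euler equation), and that composition by the generic $H^s$-diffeomorphism $\Phi_T(v_k)^{-1}$ preserves enough of the constructive interference between $\nabla V_T(w_k)$ and $\Phi_T(v_k)^{-1} - \Phi_T(w_k)^{-1}$ for the lower bound to survive. The degenerate base point $u_0 = 0$, where $V_T$ vanishes and the leading-order argument collapses, is handled by first shifting the base point slightly within $B$ into the generic regime before running the construction.
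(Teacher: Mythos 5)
Your overall framing---push the problem to the Lagrangian side, where everything is real analytic, and blame the loss of uniform continuity on the composition map---is the same as the paper's (the paper works with the analytic exponential map $\exp$ and the transported vorticity rather than the factorization $V_T\circ\Phi_T^{-1}$, but the philosophy is identical). The gap is in the construction of the two sequences, and it is fatal as stated. Your initial data differ by a \emph{purely high-frequency} field: $\phi_k-\psi_k$ is a frequency-$k$ wave packet with $\|\phi_k-\psi_k\|_{H^s}\sim 1/k$, hence with $L^\infty$ amplitude only of order $k^{-s-1}$. The two velocity fields therefore stay within $O(k^{-s-1})$ of each other in $L^\infty$, so by the Gr\"onwall estimate for $\partial_t\varphi=u(t)\circ\varphi$ the two Lagrangian flows separate \emph{pointwise} only by $O(k^{-s-1})$, not by $O(1/k)$. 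The resulting phase shift of the frequency-$k$ content of $V_T(w_k)$ is then $k\cdot O(k^{-s-1})=O(k^{-s})\to 0$, so the term $V_T(w_k)\circ\Phi_T(v_k)^{-1}-V_T(w_k)\circ\Phi_T(w_k)^{-1}$ tends to $0$ in $H^s$ rather than staying bounded below. Your ``$\nabla V_T(w_k)$ of size $k^{1-s}$ times $\|\eta_1-\eta_2\|_{H^s}\sim 1/k$'' count is only an \emph{upper} bound of order one; turning it into a lower bound requires the flows to decorrelate the oscillation, i.e.\ a pointwise displacement of genuine size $1/k$, which your perturbation does not produce. (This is exactly why Himonas--Misiolek take the two data to differ in their \emph{low-frequency} parts, $\pm k^{-1}\omega$, whose $L^\infty$ and $H^s$ sizes are both $1/k$.)

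The paper avoids this trap by decoupling the two roles. The difference of the two initial data is $v_k=\frac{R}{4k}v$ with $v$ a \emph{fixed} smooth compactly supported profile, chosen via Lemma \ref{lemma_dexp} so that $|\bigl(d_{\bar u_0}\exp(v_k)\bigr)(x^\ast)|=\delta_k\sim R/k$; this guarantees a genuine pointwise displacement of the flow at $x^\ast$ of order $1/k$, which is the lower bound your construction lacks. The ``detector'' is not an oscillation but a bump $w_k$ of fixed $H^s$-norm $R/4$ supported in a ball of radius $\rho_k<\delta_k/4$: the two flows then carry $\operatorname{supp}\Omega(w_k)$ to \emph{disjoint} sets, and the conservation law \eqref{conserved} together with Lemma \ref{lemma_uniform_estimate}, Lemma \ref{lemma_biot_savart_estimate} and the shrinking-support observation $\|w_k\|_{L^2}\to 0$ yields the lower bound $\gtrsim R$ directly, with no constructive-interference argument needed. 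If you want to salvage your route, you must replace the phase-shifted pair by a pair whose difference has $L^\infty$ size comparable to its $H^s$ size (a low-frequency drift), and you would still need a substitute for Lemma \ref{lemma_dexp} to certify that this drift actually moves the flow.
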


Note that this means that $E_T$ is \emph{not} uniformly continuous on any open non-empty subset of $U_T$.

\begin{Coro}
The solution map $E_T$ is nowhere locally Lipschitz.
\end{Coro}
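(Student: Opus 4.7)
The corollary is essentially a direct logical consequence of Theorem \ref{th_nonuniform}, so the proof plan is short. The key observation is that Lipschitz continuity on a set implies uniform continuity on that set: if $E_T$ satisfies $\|E_T(u_0) - E_T(v_0)\|_{H^s} \leq L \|u_0 - v_0\|_{H^s}$ on some open set $V \subseteq U_T$, then given $\varepsilon > 0$ the choice $\delta = \varepsilon/L$ witnesses uniform continuity of $E_T$ on $V$.

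My plan is therefore to argue by contrapositive. Suppose, toward a contradiction, that $E_T$ is locally Lipschitz at some point $u_0 \in U_T$, i.e.\ that there exists an open neighborhood $V \subseteq U_T$ of $u_0$ and a constant $L > 0$ such that $E_T|_V$ is $L$-Lipschitz. By the preceding observation, $E_T|_V$ is then uniformly continuous on $V$, which is an open non-empty subset of $U_T$. This directly contradicts Theorem \ref{th_nonuniform}, which asserts that $E_T$ fails to be uniformly continuous on every open non-empty subset of $U_T$. Hence no such $u_0$ and $V$ exist, and $E_T$ is nowhere locally Lipschitz.

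There is essentially no obstacle here, since the content is purely formal once Theorem \ref{th_nonuniform} is in hand; the only care needed is to match the quantifier structure of the phrases ``nowhere locally Lipschitz'' and ``nowhere locally uniformly continuous.'' Both are interpreted as: for every point $u_0 \in U_T$ and every open neighborhood $V$ of $u_0$ in $U_T$, the corresponding property (Lipschitz / uniform continuity) fails on $V$. With this reading, the implication Lipschitz $\Rightarrow$ uniform continuity applied pointwise gives the conclusion immediately.
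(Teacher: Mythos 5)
Your proof is correct and is exactly the argument the paper intends: the corollary is stated as an immediate consequence of Theorem \ref{th_nonuniform} (the paper leaves the deduction implicit), and your observation that Lipschitz continuity on a set implies uniform continuity on that set, with $\delta = \varepsilon/L$, is precisely the missing one-line step. Nothing further is needed.
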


\begin{Th}\label{th_nondifferentiable}
Let $n \geq 2$, $s > n/2+1$ and $T > 0$. Then the solution map $E_T:U_T \to H^s_\sigma(\R^n;\R^n)$ is nowhere differentiable.
\end{Th}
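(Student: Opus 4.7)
I would argue by contradiction. Assume $E_T$ is Fréchet differentiable at some $u_0 \in U_T$ with bounded linear derivative $L : H^s_\sigma(\R^n;\R^n) \to H^s_\sigma(\R^n;\R^n)$. Applying the defining Fréchet estimate at the two points $u_0 + h$ and $u_0 + h'$ and subtracting yields the following consequence: for every $\varepsilon > 0$ there exists $\delta > 0$ such that, whenever $u, v \in B(u_0, \delta) \cap U_T$,
\begin{equation}\label{Fstar}
\hnorm{E_T(u) - E_T(v)}{s} \,\leq\, \hnorm{L}{\mathrm{op}} \hnorm{u - v}{s} + \varepsilon \bigl( \hnorm{u - u_0}{s} + \hnorm{v - u_0}{s} \bigr).
\end{equation}

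The plan is to contradict \eqref{Fstar} by refining the construction underlying Theorem \ref{th_nonuniform}. For each small $r > 0$, I would produce two sequences $(u_n), (v_n) \subseteq B(u_0, r) \cap U_T$ satisfying (i) $\hnorm{u_n - u_0}{s}, \hnorm{v_n - u_0}{s} \asymp r$; (ii) $\hnorm{u_n - v_n}{s} \to 0$ as $n \to \infty$; and (iii) $\liminf_n \hnorm{E_T(u_n) - E_T(v_n)}{s} \geq c\, r$ for some constant $c > 0$ independent of $r$. Substituting such sequences into \eqref{Fstar} and passing to $\liminf_n$ yields $c r \leq 2 \varepsilon r$, which contradicts (iii) once $\varepsilon$ is chosen smaller than $c/2$ from the outset; since $u_0 \in U_T$ is arbitrary, $E_T$ is then nowhere differentiable.

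The concrete construction would use the high-frequency perturbation ansatz from the proof of Theorem \ref{th_nonuniform} with amplitude $r$: take $u_n = u_0 + r\,\psi_n$ and $v_n = u_0 + r\,\psi_n'$, where $\psi_n, \psi_n' \in H^s_\sigma(\R^n;\R^n)$ are normalized, compactly supported, divergence-free vector fields (constructed, say, from a 2D stream function) oscillating at a large frequency $\lambda_n \to \infty$ and differing by a small phase shift calibrated so that $\hnorm{u_n - v_n}{s} \to 0$. The main obstacle is the verification of the quantitative lower bound (iii): one must insert the ansatz into a Kato-type energy estimate, compare the true solutions $u_n(t), v_n(t)$ with approximate solutions that capture the leading-order transport of the perturbations along the flow of $u_0$, and confirm that the remainder terms are $o(r)$ in $H^s$ at time $T$. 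The key quantitative point is to calibrate the amplitude $r$, the frequency $\lambda_n$, and the phase shift so that the nonlinear interaction between the perturbation and the background flow produces an $H^s$-separation of the two time-$T$ solutions proportional to $r$, rather than to the much smaller initial separation $\hnorm{u_n - v_n}{s}$. This quantitative refinement is precisely what distinguishes Theorem \ref{th_nondifferentiable} from the weaker non-uniform continuity statement of Theorem \ref{th_nonuniform}.
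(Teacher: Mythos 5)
Your reduction to a contradiction is correct and is essentially the argument the paper uses: assume differentiability at a point $w$, note that the linear term $d_w\phi(u_{0,k}-\tilde u_{0,k})$ vanishes in the limit because the initial data converge to each other, and observe that the Taylor remainders are each $O(\varepsilon r)$ while the outputs stay $\geq c r$ apart --- a contradiction once $\varepsilon < c/2$. The quantifier order in your inequality \eqref{Fstar} is handled correctly ($\varepsilon$ fixed before $\delta$, then $r \leq \delta$), and this part matches the paper's proof of Theorem \ref{th_nondifferentiable} almost verbatim.

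The genuine gap is item (iii): you never construct sequences with $\liminf_n \|E_T(u_n)-E_T(v_n)\|_s \geq c\,r$, $c$ independent of $r$, and you explicitly concede that verifying this is ``the main obstacle.'' That lower bound \emph{is} the theorem; everything else is formal. Your proposed route --- high-frequency oscillating profiles with a phase shift, compared to approximate solutions via Kato-type energy estimates --- is the Himonas--Misiolek mechanism, and it is not clear it delivers a separation proportional to the amplitude $r$ (their result gives a lower bound $C_s \sin t$ not tied to the distance from the base point in the required way); in any case you do not carry out the calibration. The paper obtains (iii) by a different and already-completed route: inequality \eqref{indep_of_R} in the proof of Proposition \ref{prop_nonuniform} is precisely the bound $\limsup_k\|\phi(u_{0,k})-\phi(\tilde u_{0,k})\|_s \geq \frac{R}{4C_3C_4}$, proved not by oscillation but by Lagrangian transport of vorticity: one adds a bump $w_k$ of fixed $H^s$-size $R/4$ supported in a ball of radius $\rho_k \to 0$ placed far from the support of the (smoothed) base datum, plus a perturbation $v_k \to 0$ chosen via Lemma \ref{lemma_dexp} so that the two flows move the point $x^\ast$ apart by more than the diameter of the transported supports. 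Disjointness of the supports of the transported vorticities forces the $H^{s-1}$ norms to add rather than cancel, and Lemmas \ref{lemma_biot_savart_estimate} and \ref{lemma_uniform_estimate} convert this back into an $H^s$ lower bound proportional to $R$. So your remark that the quantitative refinement ``distinguishes'' the nondifferentiability theorem from the non-uniform-continuity theorem is inaccurate for this paper: both rest on the same estimate, and the proof of Theorem \ref{th_nondifferentiable} adds only the formal argument you already have. As written, your proposal establishes the easy half and leaves the essential construction unproved.
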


Theorem \ref{th_nondifferentiable} is not implied by Theorem \ref{th_nonuniform}. Indeed, for a continuous function $f:H \to \R$, $(H,\langle \cdot,\cdot \rangle)$ a Hilbert space, which is nowhere locally uniformly continuous, the function $H \to \R$, $x \mapsto \langle x,x \rangle f(x)$ is still nowhere locally uniformly continuous, but differentiable in $x=0$. 

\emph{Related work}:
The question of the regularity of $E_T$ was raised in Ebin, Marsden \cite{ebin_marsden}. A first answer was given in Himonas, Misiolek \cite{himonas}. Himonas and Misiolek construct a pair of sequences of solutions $(u_k)_{k \geq 1},(\tilde u_k)_{k \geq 1}$ to \eqref{E} with the following property: For all $s > 0$ 
\begin{itemize}
\item[(i)] $\big(u_k(0)\big)_{k \geq 1}$ and $\big(\tilde u_k(0)\big)_{k \geq 1}$ are bounded in $H^s_\sigma(\R^n;\R^n)$ with
\[
 \lim_{k \to \infty} ||u_k(0)-\tilde u_k(0)||_s =0.
\]
\end{itemize}
\noindent
and there is a constant $C_s > 0$ so that
\begin{itemize}
\item[(ii)] for all $0 < t < 1$ 
\[
 \liminf_{k \geq 1} ||u_k(t)-\tilde u_k(t)||_s \geq C_s \sin t.
\]
\end{itemize}
This shows that $E_T$ is not uniformly continuous on some bounded sets. \\
We should also mention the result in Kato \cite{kato_burgers}, for the inviscid Burgers' equation
\begin{equation}\label{burgers_eq}
 \partial_t u + u \partial_x u=0,\; u(0)=u_0, \quad t \in \R,\; x \in \R.
\end{equation}
Kato proves that for no $0 < \alpha \leq 1$ and no $t > 0$ the solution map of equation \eqref{burgers_eq}, $u_0 \mapsto u(t)$, is locally $\alpha$-H\"older continuous in the Sobolev space $H^s(\R), s \geq 2$.\\

This paper is more or less an excerpt from the thesis Inci \cite{thesis}. So omitted proofs or references where they can be found are given in Inci \cite{thesis}.

\section{Lagrangian description}\label{section_lagrangian}

Consider now a fluid motion determined by $u$. If one fixes a fluid particle which at time $t=0$ is located at $x \in \R^n$ and whose position at time $t \geq 0$ we denote by $\varphi(t,x) \in \R^n$, we get the following relation between $u$ and $\varphi$
\[
 \partial_t \varphi(t,x) = u\big(t,\varphi(t,x)),
\]
i.e. $\varphi$ is the flow-map of the vectorfield $u$. The second equation in \eqref{E} translates to the well-known relation $\det(d\varphi) \equiv 1$, where $d\varphi$ is the Jacobian of $\varphi$ -- see Majda, Bertozzi \cite{majda}. In this way we get a description of system \eqref{E} in terms of $\varphi$. The description of \eqref{E} in the $\varphi$-variable is called the Lagrangian description of \eqref{E}, whereas the description in the $u$-variable is called the Eulerian description of \eqref{E}. One advantage of the Lagrangian description of \eqref{E} is that it leads to an ODE formulation of \eqref{E}. This was already used in Lichtenstein \cite{lichtenstein} and Gunter \cite{gunter} to get local well-posedness of \eqref{E}.\\ \\
 The discussion in Section \ref{section_introduction} shows that in this paper the state-space of \eqref{E} in the Eulerian description is $H^s(\R^n;\R^n)$, $s > n/2+1$. The state-space of \eqref{E} in the Lagrangian description is given by 
\[
 \Ds^s(\R^n) = \big\{ \varphi:\R^n \to \R^n \;\big|\; \varphi - \operatorname{id} \in H^s(\R^n;\R^n) \mbox{ and } \det d_x\varphi > 0, \;\forall x \in \R^n\big\}
\]
where $\operatorname{id}:\R^n \to \R^n$ is the identity map. Due to the Sobolev imbedding and the condition $s > n/2+1$ the space of maps $\Ds^s(\R^n)$ consists of $C^1$-diffeomorphisms -- see Palais \cite{palais} -- and can be identified via $\Ds^s(\R^n) - \operatorname{id} \subseteq H^s(\R^n;\R^n)$ with an open subset of $H^s(\R^n;\R^n)$. Thus $\Ds^s(\R^n)$ has naturally a real analytic differential structure (for real analyticity we refer to Whittlesey \cite{analyticity}) with the natural identification of the tangent space 
\[
 T\Ds^s(\R^n) \simeq \Ds^s(\R^n) \times H^s(\R^n;\R^n).
\] 
Moreover it is known that $\Ds^s(\R^n)$ is a topological group under composition and that for $0 \leq s' \leq s$ the composition map
\begin{equation}\label{composition}
 H^{s'}(\R^n) \times \Ds^s(\R^n) \to H^{s'}(\R^n), \quad (f,\varphi) \mapsto f \circ \varphi
\end{equation}
is continuous -- see Cantor \cite{cantor_thesis} and Inci, Topalov, Kappeler \cite{composition}. That $\Ds^s(\R^n)$ is the right choice as configuration space for \eqref{E} in Lagrangian coordinates is justified by the fact that every $u \in C^0\big([0,T];H^s(\R^n;\R^n)\big)$, $s > n/2+1$, integrates uniquely to a $\varphi \in C^1\big([0,T];\Ds^s(\R^n)\big)$ fullfilling
\[
 \partial_t \varphi(t) = u(t) \circ \varphi(t) \quad \mbox{for all } 0 \leq t \leq T 
\]
-- see Fischer, Marsden \cite{fischer} or Inci \cite{thesis} for an alternative proof. \\
It turns out that one can describe system \eqref{E} in Lagrangian coordinates by a map, which we call the exponential map associated to \eqref{E}. More precisely (see Inci \cite{lagrangian} for the proof)

\begin{Prop}\label{prop_exp}
Let $n \geq 2$ and $s > n/2+1$. Then there is an open neighborhood $U \subseteq H^s(\R^n;\R^n)$ of $0$ and a real analytic map (called the exponential map associated to \eqref{E})
\[
 \exp:U \to \Ds^s(\R^n)
\]
with the following property: For $T > 0$ let $u \in C^0\big([0,T];H^s(\R^n;\R^n)\big)$ be a solution to \eqref{E} for some $u_0 \in H^s_\sigma(\R^n;\R^n)$ with the corresponding flow $\varphi \in C^1\big([0,T];\Ds^s(\R^n)\big)$ solving $\partial_t \varphi(t)=u(t) \circ \varphi(t)$ for any $0 \leq t \leq T$. Then we have
\[
 \varphi(t)=\exp(t u_0) \quad \forall 0 \leq t \leq T.
\]
\end{Prop}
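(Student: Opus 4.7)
The plan is to recast the Euler system \eqref{E} in Lagrangian coordinates as a second-order ODE on the Banach manifold $\Ds^s(\Rn)$ with a real analytic right-hand side, and then invoke existence and uniqueness for analytic ODEs on Banach manifolds. Differentiating $\partial_t \varphi = u\circ\varphi$ in time and using the momentum equation gives
$$
\partial_t^2\varphi = \big(\partial_t u + (u\cdot\nabla)u\big)\circ\varphi = -(\nabla p)\circ\varphi,
$$
while taking the divergence of the momentum equation together with $\operatorname{div} u=0$ yields the Poisson equation $\Delta p = -\sum_{i,j}(\partial_i u_j)(\partial_j u_i)$. Using $u=\dot\varphi\circ\varphi^{-1}$ and the chain rule one rewrites the right-hand side as $Q(d\varphi,d\dot\varphi)\circ\varphi^{-1}$ for an expression $Q$ polynomial in $d\dot\varphi$ and rational in the entries of $d\varphi$ (with $\det(d\varphi)$ in the denominator). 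The Lagrangian formulation then reads
$$
\ddot\varphi = -\nabla(-\Delta)^{-1}\big[Q(d\varphi,d\dot\varphi)\circ\varphi^{-1}\big]\circ\varphi =: F(\varphi,\dot\varphi).
$$

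The heart of the argument is to show that $F$ extends to a real analytic map in a neighborhood of the zero section of $T\Ds^s(\Rn)\simeq \Ds^s(\Rn)\times H^s(\Rn;\Rn)$, with values in $H^s(\Rn;\Rn)$. This is the main obstacle: naively, the inner composition with $\varphi^{-1}$ costs one derivative, and only the fact that the Riesz-type symbol $\nabla(-\Delta)^{-1}\operatorname{div}$ is of order zero while $\nabla(-\Delta)^{-1}$ is of order $-1$ allows this loss to be recovered. I would decompose $F$ into (i) the pointwise polynomial/rational operations producing $Q(d\varphi,d\dot\varphi)$, controlled by \eqref{multiplication}; (ii) the compositions with $\varphi$ and with $\varphi^{-1}$, whose real analyticity comes from the results on \eqref{composition} cited from Cantor and Inci, Topalov, Kappeler; (iii) the real analyticity of the inversion $\varphi\mapsto\varphi^{-1}$ on $\Ds^s(\Rn)$; and (iv) the bounded Fourier multiplier $\nabla(-\Delta)^{-1}:H^{s-1}(\Rn)\to H^s(\Rn;\Rn)$. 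Bookkeeping the Sobolev exponents shows that each step lands in the appropriate space (the gain in (iv) compensating the loss in (ii)), and since the composition of real analytic maps is real analytic, so is $F$.

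Once $F$ is real analytic near the zero section, the second-order vector field $(\varphi,v)\mapsto (v,F(\varphi,v))$ on $T\Ds^s(\Rn)$ is real analytic, so the analytic Picard theorem on Banach manifolds (Whittlesey \cite{analyticity}) yields a real analytic local flow $\Phi_t$. On a small open neighborhood $U\subseteq H^s(\Rn;\Rn)$ of $0$ one defines
$$
\exp(u_0) := \pi_1 \Phi_1(\id,u_0),
$$
with $\pi_1:T\Ds^s(\Rn)\to\Ds^s(\Rn)$ the footpoint projection. Because $F(\varphi,\lambda v)=\lambda^2 F(\varphi,v)$, the rescaling $t\mapsto\lambda t$ maps a solution with initial velocity $v_0$ to one with initial velocity $\lambda v_0$, which produces the identity $\exp(tu_0)=\varphi(t)$ for any Euler solution $u$ of \eqref{E} with $u_0\in H^s_\sigma(\Rn;\Rn)$, after matching this ODE solution to the unique Kato solution from Theorem \ref{th_kato}. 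The main obstacle throughout is the analyticity of $F$ in the second paragraph; the rest is a routine application of ODE theory on Banach manifolds.
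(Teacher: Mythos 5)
The paper does not prove Proposition \ref{prop_exp}; it refers to Inci \cite{lagrangian} (and \cite{thesis}) for the proof. Your overall skeleton — recast \eqref{E} as a second-order ODE $\ddot\varphi = F(\varphi,\dot\varphi)$ on $\Ds^s(\R^n)$, show $F$ is real analytic, apply the analytic Picard theorem, and use the quadratic homogeneity $F(\varphi,\lambda v)=\lambda^2F(\varphi,v)$ to reduce the time-$t$ flow to the time-$1$ map — is exactly the Ebin--Marsden/Lagrangian strategy that the cited proof follows, and you correctly identify the analyticity of $F$ as the heart of the matter.

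However, your justification of that central step contains a genuine gap. In your decomposition you claim that the real analyticity of the compositions with $\varphi$ and $\varphi^{-1}$ "comes from the results on \eqref{composition}" and that inversion $\varphi\mapsto\varphi^{-1}$ is real analytic on $\Ds^s(\R^n)$. Neither is true: \eqref{composition} asserts only \emph{continuity} of $(f,\varphi)\mapsto f\circ\varphi$ on $H^{s'}(\R^n)\times\Ds^s(\R^n)\to H^{s'}(\R^n)$, and this map is not even Lipschitz in $\varphi$ at fixed Sobolev index — each degree of differentiability costs a derivative of regularity of $f$ (formally, $d_\varphi(f\circ\varphi)=(df\circ\varphi)\cdot(\,\cdot\,)$ involves $df$). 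The same is true of $\varphi\mapsto\varphi^{-1}$. So your steps (ii) and (iii) compose maps that are individually \emph{not} analytic, and "composition of real analytic maps is real analytic" cannot be invoked. This is precisely the difficulty the Lagrangian approach is designed to circumvent: one must never isolate $R_{\varphi^{-1}}$ as a separate factor. Instead one works with the conjugated operators as a whole, e.g. writes $R_\varphi\circ\Delta\circ R_\varphi^{-1}$ (or $R_\varphi\circ\nabla(-\Delta)^{-1}\operatorname{div}\circ R_\varphi^{-1}$) as an operator whose coefficients are polynomial in $d\varphi$ and rational in $\det d\varphi$, hence real analytic in $\varphi$ by \eqref{multiplication}, and then obtains analyticity of the inverse from the analyticity of operator inversion $A\mapsto A^{-1}$ on the open set of invertible bounded operators between the relevant Sobolev spaces. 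Your Fourier-multiplier bookkeeping in (iv) also needs the double-divergence structure $\sum_{i,j}\partial_iu_j\,\partial_ju_i=\sum_{i,j}\partial_i\partial_j(u_iu_j)$ to make the zero-order Riesz-type symbol appear; as written, $\nabla(-\Delta)^{-1}$ applied to a generic element of $H^{s-1}(\R^n)$ is problematic on $\R^n$ because of the low-frequency singularity of $|\xi|^{-2}$. With the conjugation argument in place the rest of your outline (analytic flow, rescaling, matching with Kato's solution) is sound.
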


\noindent
Note that we have $U \cap H^s_\sigma(\R^n;\R^n) = \left. U_T \right|_{T=1}$.

\section{Vorticity}\label{section_vorticity}

A key ingredient for the proof of Theorem \ref{th_nonuniform} and Theorem \ref{th_nondifferentiable} will be the vorticity -- see Bertozzi, Majda \cite{majda} and Inci \cite{thesis} for missing proofs.

\begin{Def}\label{def_vorticity}
Let $u=(u_1,\ldots,u_n)$ be a $C^1$-vector field on $\R^n$. Then the antisymmetric matrix
\[
 \Omega(u):=(\Omega_{ij})_{1 \leq i,j \leq n} := (\partial_i u_j-\partial_j u_i)_{1 \leq i,j \leq n}
\]
is called the vorticity of $u$.
\end{Def}

One can recover a divergence-free vector field from its vorticity by the Biot-Savart law.

\begin{Lemma}\label{lemma_biot_savart}
Let $u$ be a $C^1$-vector field with $\operatorname{div} u=0$ and compactly supported vorticity $\Omega:=\Omega(u)$. Then we have
\[
 u(x) = \frac{1}{\omega_n} \int_{\R^n} \Omega(y) \cdot \frac{x-y}{|x-y|^n}\;dy
\]
for any $x \in \R^n$. Here integration is done componentwise and $\omega_n$ denotes the surface area of a unit sphere in $\R^n$.
\end{Lemma}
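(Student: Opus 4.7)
The plan is to turn the problem into a Poisson equation for each component of $u$ and then invert the Laplacian with the Newton potential. Using $\operatorname{div} u = 0$ and the definition $\Omega_{ij}=\partial_i u_j-\partial_j u_i$, one computes, for each $1\le i\le n$,
\[
\Delta u_i \;=\; \sum_{j=1}^n \partial_j\big(\partial_i u_j-\Omega_{ij}\big) \;=\; \partial_i\operatorname{div} u - \sum_{j=1}^n \partial_j\Omega_{ij} \;=\; -\sum_{j=1}^n \partial_j\Omega_{ij},
\]
so $\Delta u_i$ is itself compactly supported. Since $u$ is harmonic outside the support of $\Omega$ and (implicitly, and automatically in the $H^s$-setting of the paper) decays at infinity, I would then represent $u_i$ by convolution with the fundamental solution $G$ of $-\Delta$, where $G(z)=\tfrac{1}{(n-2)\omega_n}|z|^{2-n}$ for $n\ge3$ and $G(z)=-\tfrac{1}{2\pi}\log|z|$ for $n=2$, giving
\[
u_i(x) \;=\; \int_{\Rn} G(x-y)\sum_{j=1}^n \partial_{y_j}\Omega_{ij}(y)\,dy.
\]

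The next step is an integration by parts in $y$. The compact support of $\Omega$ removes the boundary contribution at infinity; the singularity of $G$ at $y=x$ is handled by excising a ball $B_\epsilon(x)$ and noting that the surface term on $\partial B_\epsilon(x)$ is of order $\epsilon$ (of order $\epsilon|\log\epsilon|$ when $n=2$), hence vanishes as $\epsilon\to0$. This yields
\[
u_i(x) \;=\; \sum_{j=1}^n \int_{\Rn} (\partial_j G)(x-y)\,\Omega_{ij}(y)\,dy.
\]
A direct calculation gives $\nabla G(z)=-\tfrac{1}{\omega_n}\,\tfrac{z}{|z|^n}$ uniformly in $n\ge 2$; substituting this and rewriting via the antisymmetry $\Omega_{ij}=-\Omega_{ji}$ produces the stated Biot--Savart identity.

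The main hurdle is not the calculation itself but the justification that the Newton-potential convolution recovers exactly $u$, not $u$ plus a harmonic correction. This is precisely where one needs the decay of $u$ at infinity: being harmonic off the support of $\Omega$, any ambiguity in $u$ is a harmonic function on $\Rn$, which must vanish if $u$ decays. In the lemma's stand-alone formulation this decay is tacit, but in the context of the paper (where all vector fields live in $H^s(\Rn;\Rn)$) it is automatic, so no extra hypothesis is needed. Once this uniqueness point is nailed down, the rest is a routine chain-rule and integration-by-parts computation.
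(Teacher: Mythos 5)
The paper itself gives no proof of this lemma (it defers to Majda--Bertozzi and to Inci's thesis), and your argument is precisely the standard derivation used there: reduce to the componentwise Poisson equation $-\Delta u_i=\sum_j\partial_j\Omega_{ij}$ via $\operatorname{div}u=0$, invert with the Newtonian potential, and integrate by parts, so the approaches coincide. Your derivation is correct, including the sign bookkeeping via antisymmetry of $\Omega$, and you rightly flag the one genuine subtlety --- that decay of $u$ at infinity is needed to exclude a harmonic (e.g.\ constant) correction, a hypothesis that is tacit in the lemma's statement but automatic in the paper's $H^s$ setting.
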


Recall that for $u \in H^s(\R^n;\R^n)$, $s \geq 0$, we use the norm $||\cdot||_s$ given by
\[
 ||u||_s^2 = \int_{\R^n} (1+|\xi|^2)^s \left(|\hat u_1(\xi)|^2 + \ldots + |\hat u_n(\xi)|^2\right) \;d\xi
\]
where $\hat f$ denotes the Fourier transform of a function $f$. In the same way we define the norm of a matrix valued map. One than has

\begin{Lemma}\label{lemma_biot_savart_estimate}
Let $s > n/2+ 1$. Then there is a constant $C > 0$ such that we have
\[
 ||du||_{s-1} \leq C ||\Omega(u)||_{s-1}, \quad \forall u \in H^s_\sigma(\R^n;\R^n)
\]
where $du$ denotes the Jacobian matrix of $u$.
\end{Lemma}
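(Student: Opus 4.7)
The plan is to recover $du$ from $\Omega(u)$ by an explicit Fourier multiplier representation, and then observe that the resulting multipliers are pointwise bounded, so that the $H^{s-1}$-estimate reduces to Plancherel's theorem.

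First I would pass to Fourier variables. Writing $\hat\Omega_{ij}(\xi) = i\xi_i \hat u_j(\xi) - i\xi_j \hat u_i(\xi)$ and using that the divergence-free condition translates to $\sum_{k=1}^n \xi_k \hat u_k(\xi) = 0$ for a.e.\ $\xi$, I would contract with $\xi_i$ and sum over $i$ to obtain
\[
 \sum_{i=1}^n \xi_i \hat\Omega_{ij}(\xi) = i|\xi|^2 \hat u_j(\xi) - i \xi_j \sum_{i=1}^n \xi_i \hat u_i(\xi) = i|\xi|^2 \hat u_j(\xi).
\]
This inverts the curl on divergence-free fields, and multiplying by $i\xi_k$ produces, for $\xi\neq 0$,
\[
 \widehat{\p_k u_j}(\xi) = \sum_{i=1}^n \frac{\xi_i \xi_k}{|\xi|^2}\, \hat\Omega_{ij}(\xi).
\]

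Next I would observe that each Fourier multiplier $m_{ik}(\xi) := \xi_i\xi_k/|\xi|^2$ satisfies $|m_{ik}(\xi)| \leq 1$ pointwise (undefined at $\xi = 0$, which is a null set and so irrelevant for the $L^2$-based norms). By Cauchy-Schwarz I get the pointwise bound
\[
 |\widehat{\p_k u_j}(\xi)|^2 \leq n \sum_{i=1}^n |\hat\Omega_{ij}(\xi)|^2.
\]
Multiplying by $(1+|\xi|^2)^{s-1}$, integrating over $\xi \in \R^n$ and using Plancherel gives $||\p_k u_j||_{s-1}^2 \leq n \sum_i ||\Omega_{ij}||_{s-1}^2$; summing over $k,j$ yields $||du||_{s-1} \leq C ||\Omega(u)||_{s-1}$ with a constant depending only on $n$.

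There is no real obstacle: the lemma is essentially the boundedness of double Riesz transforms on $H^{s-1}(\R^n)$, which in the $L^2$-Sobolev setting follows immediately from the trivial pointwise bound on the multiplier, without invoking any Calderón-Zygmund machinery. The only point requiring a brief word is that the formula for $\hat u_j$ is \emph{a priori} valid only away from $\xi=0$; this singleton having measure zero makes it harmless for the Plancherel-type integration.
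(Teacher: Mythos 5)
Your proof is correct. The identity $\sum_{i}\xi_i\hat\Omega_{ij}(\xi)=i|\xi|^2\hat u_j(\xi)$ does follow from antisymmetrizing and using $\sum_k\xi_k\hat u_k=0$, the multipliers $\xi_i\xi_k/|\xi|^2$ are bounded by $1$, and the Cauchy--Schwarz plus Plancherel step is sound; the removal of the single point $\xi=0$ is indeed harmless for the $L^2$-based norms, and your constant depends only on $n$ (in fact $\sum_i\xi_i^2\xi_k^2/|\xi|^4=\xi_k^2/|\xi|^2\leq 1$, so one could even take $C$ independent of $n$ up to the bookkeeping of the matrix norm). The paper itself omits the proof, deferring to Majda--Bertozzi and the author's thesis, and frames the estimate as a consequence of the Biot--Savart representation of Lemma \ref{lemma_biot_savart}: there one writes $u$ as a convolution with the kernel $\frac{x-y}{|x-y|^n}$, so that $du$ becomes a singular integral of Calder\'on--Zygmund type applied to $\Omega(u)$. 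Your argument is the Fourier-side incarnation of exactly that operator (the double Riesz transform), but in the $L^2$-Sobolev setting it is genuinely more elementary: it needs no kernel estimates or singular-integral theory, only Plancherel, and it also makes transparent that the hypothesis $s>n/2+1$ plays no role in this particular lemma. What the physical-space route buys in exchange is applicability beyond $L^2$-based spaces (e.g.\ $L^p$ or H\"older estimates), which is why the references present it that way; for the purposes of this paper your shorter derivation is entirely adequate.
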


A very important property of the vorticity is the following conservation law (an immediate consequence of the vorticity equation -- see Inci \cite{thesis}):

\begin{Lemma}\label{lemma_conservation_law}
Let $n \geq 2$ and $s > n/2+1$. Let further $u \in C^0\big([0,T];H^s(\R^n;\R^n)\big)$, $T > 0$, be a solution of \eqref{E} with $u(0)=u_0 \in H^s_\sigma(\R^n;\R^n)$. We define
\[
 \Omega(t):=\Omega\big(u(t)\big) \mbox{ and } \varphi(t):=\exp(t u_0), \quad 0 \leq t \leq T.
\] 
Then we have for any $0 \leq t \leq T$
\[
 d\varphi(t)^\top \cdot \Omega(t)\circ \varphi(t) \cdot d \varphi(t) = \Omega(0)
\]
or
\begin{equation}\label{conserved}
 \Omega(t) = R_{\varphi(t)}^{-1}\left((d \varphi(t)^{-1})^\top \cdot \Omega(0) \cdot d \varphi(t)^{-1} \right)
\end{equation}
where $R_\varphi$ denotes the map $f \mapsto f \circ \varphi$.
\end{Lemma}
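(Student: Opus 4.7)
The plan is to derive the vorticity equation from \eqref{E} by eliminating the pressure, and then verify by direct differentiation in $t$ that the quantity
\[
 F(t) := d\varphi(t)^\top \cdot \Omega(t)\circ \varphi(t) \cdot d\varphi(t)
\]
is constant along a solution. Since $\varphi(0)=\operatorname{id}$ gives $F(0)=\Omega(0)$, this proves the first identity; the second follows by multiplying on left and right by $d\varphi(t)^{-\top}$ and $d\varphi(t)^{-1}$ and then composing with $\varphi(t)^{-1}$, using the definition of $R_{\varphi(t)}$.

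To obtain the vorticity equation, I apply $\partial_i$ to the $j$-th component of the first line of \eqref{E} and subtract the analogous expression with $i,j$ interchanged. The pressure contribution $\partial_i\partial_j p - \partial_j\partial_i p$ vanishes, and the convective term, using $\partial_i u_k = \partial_k u_i + \Omega_{ki}$ (wait: from the definition $\Omega_{ik}=\partial_i u_k-\partial_k u_i$, so $\partial_i u_k = \partial_k u_i + \Omega_{ik}$), rearranges to give
\[
 \partial_t \Omega + (u\cdot\nabla)\Omega + (du)^\top \Omega + \Omega\, du = 0,
\]
where the two ``stretching'' terms come from the antisymmetric parts left over after splitting $\partial_i u_k$ and $\partial_j u_k$ into symmetric and antisymmetric pieces. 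I would carry out this algebraic manipulation pointwise; the regularity $u\in C^0([0,T];H^s)$ with $s>n/2+1$ together with \eqref{RE} gives $u\in C^1([0,T];H^{s-1})$, so all terms make sense classically, as required.

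Next I differentiate $F(t)$ in $t$. Using $\partial_t\varphi(t)=u(t)\circ \varphi(t)$ one has $\partial_t (d\varphi) = (du\circ\varphi)\cdot d\varphi$ and, by the chain rule,
\[
 \partial_t\bigl(\Omega\circ\varphi\bigr) = \bigl(\partial_t\Omega + (u\cdot\nabla)\Omega\bigr)\circ\varphi = -\bigl((du)^\top\Omega + \Omega\, du\bigr)\circ\varphi,
\]
where the last equality uses the vorticity equation. Applying the product rule to $F=d\varphi^\top \cdot (\Omega\circ\varphi)\cdot d\varphi$ yields four terms; the term coming from $\partial_t(d\varphi^\top)$ cancels the $(du)^\top\circ\varphi$ contribution from $\partial_t(\Omega\circ\varphi)$, and the term coming from $\partial_t(d\varphi)$ cancels the $du\circ\varphi$ contribution. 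Hence $\partial_t F\equiv 0$ and $F(t)=F(0)=\Omega(0)$.

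The main technical nuisance will be justifying the chain rule $\partial_t(\Omega\circ\varphi)=(\partial_t\Omega+(u\cdot\nabla)\Omega)\circ\varphi$ in Sobolev class when $s$ is close to $n/2+1$, since then $\Omega(t)\in H^{s-1}$ and its spatial derivatives lie only in $H^{s-2}$. This is bypassed by the Sobolev imbedding, which gives $u(t),\Omega(t)\in C^1$ in $x$, so the identity holds pointwise; the continuity statements \eqref{composition} and \eqref{multiplication} then let one promote the pointwise identity to one in $C^0([0,T];H^{s-1})$, which is all that is needed for $\partial_t F=0$ to be meaningful. Everything else is the routine computation outlined above.
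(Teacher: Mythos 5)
Your overall route --- derive the vorticity equation $\partial_t\Omega+(u\cdot\nabla)\Omega+(du)^\top\Omega+\Omega\,du=0$ and check that $F(t)=d\varphi(t)^\top\cdot\Omega(t)\circ\varphi(t)\cdot d\varphi(t)$ has vanishing time derivative --- is the one the paper itself points to (it calls the lemma ``an immediate consequence of the vorticity equation'' and defers all details to the thesis and to Majda--Bertozzi), and your algebra is correct: the quadratic term $\sum_k(\partial_iu_k\,\partial_ku_j-\partial_ju_k\,\partial_ku_i)$ equals $\big((du^\top)^2-(du)^2\big)_{ij}=\big(du^\top\Omega+\Omega\,du\big)_{ij}$, and the two stretching terms cancel against the contributions of $\partial_t(d\varphi^\top)$ and $\partial_t(d\varphi)$ exactly as you describe.

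The gap is in your regularity justification, and it is a genuine one. You claim the Sobolev imbedding gives $\Omega(t)\in C^1$ in $x$; it does not. From $s>n/2+1$ you get $u(t)\in H^s\hookrightarrow C^1$, hence $\Omega(t)\in H^{s-1}\hookrightarrow C^0$ only; $\Omega(t)\in C^1$ would require $s>n/2+2$. So in the range $n/2+1<s\le n/2+2$ --- precisely the low-regularity regime the lemma is stated for --- neither $(u\cdot\nabla)\Omega$ nor the chain rule $\partial_t(\Omega\circ\varphi)=\big(\partial_t\Omega+(u\cdot\nabla)\Omega\big)\circ\varphi$ makes classical pointwise sense, and the composition map of \eqref{composition} is continuous but not differentiable, so $t\mapsto\Omega(t)\circ\varphi(t)$ is not obviously a $C^1$ curve in $H^{s-1}$. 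Two standard repairs: (i) prove the identity first for initial data in $C^\infty_{c,\sigma}(\R^n;\R^n)$ (or in $H^{s'}_\sigma$ with $s'>n/2+2$), where your computation is literally classical, then pass to the limit using persistence of regularity, Kato's continuous dependence, and the continuity of \eqref{multiplication} and \eqref{composition}, since both sides of $d\varphi^\top\cdot\Omega\circ\varphi\cdot d\varphi=\Omega(0)$ depend continuously on $u_0\in H^s_\sigma(\R^n;\R^n)$; or (ii) run your computation with one derivative of slack, interpreting the vorticity equation in $C^0\big([0,T];H^{s-2}\big)$ (multiplication $H^s\times H^{s-2}\to H^{s-2}$ is continuous by \eqref{multiplication} since $s\ge 2$) and invoking the fact that composition $H^{s-1}\times\Ds^s(\R^n)\to H^{s-2}$ is $C^1$, so that $\partial_tF=0$ holds in $H^{s-2}$ while $F$ itself is a continuous curve in $H^{s-1}$. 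Either way, the step you dismiss as a ``technical nuisance'' is the actual content of the proof at this regularity, and the justification you offer for it is false as stated.
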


Note that from \eqref{conserved} we conclude that the support of the vorticity $\Omega(t)$ remains compact if $\Omega(0)$ is compact. We have the following estimate for expressions of the form \eqref{conserved}. 

\begin{Lemma}\label{lemma_uniform_estimate}
Let $s > n/2+1$ and $\varphi_\bullet \in \Ds^s(\R^n)$. Then there is $C > 0$ and a neighborhood $U \subseteq \Ds^s(\R^n)$ of $\varphi_\bullet$ such that 
\[
 \frac{1}{C} ||f||_{s-1} \leq ||R_\varphi^{-1} \left((d\varphi^{-1})^\top  \cdot f \cdot d\varphi^{-1}\right)||_{s-1} \leq C ||f||_{s-1}
\]
for any $f \in H^{s-1}(\R^n;\R^{n \times n})$ and any $\varphi \in U$.
\end{Lemma}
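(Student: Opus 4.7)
The plan is to read the integrand of the lemma pointwise as
\[
F(x) := R_\varphi^{-1}\bigl((d\varphi^{-1})^\top \cdot f \cdot d\varphi^{-1}\bigr)(x) = d\varphi^{-1}(x)^\top \cdot f(\varphi^{-1}(x)) \cdot d\varphi^{-1}(x),
\]
matching the vorticity transport formula of Lemma \ref{lemma_conservation_law}, and then to bound it using the Banach algebra multiplication \eqref{multiplication} at the level $s'=s-1>n/2$ together with the continuity of the composition map \eqref{composition}.

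For the upper bound I decompose $d\varphi^{-1}=I+A$ with $A:=d(\varphi^{-1}-\id)\in H^{s-1}(\R^n;\R^{n\times n})$, so that
\[
F = (f\circ\varphi^{-1}) + A^\top (f\circ\varphi^{-1}) + (f\circ\varphi^{-1}) A + A^\top (f\circ\varphi^{-1}) A.
\]
Two uniform ingredients are needed. First, the continuity of inversion $\varphi\mapsto\varphi^{-1}$ on $\Ds^s(\R^n)$ (a consequence of $\Ds^s(\R^n)$ being a topological group) combined with the bounded linear map $H^s\to H^{s-1}$, $\psi-\id\mapsto d(\psi-\id)$, yields a neighborhood $U_1$ of $\varphi_\bullet$ on which $\|A\|_{s-1}$ is uniformly bounded. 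Second, I claim there is a neighborhood $U_2$ of $\varphi_\bullet$ and $M>0$ such that $\|f\circ\varphi^{-1}\|_{s-1}\leq M\|f\|_{s-1}$ for every $f$ and every $\varphi\in U_2$: the continuity of \eqref{composition} at the point $(0,\varphi_\bullet^{-1})$ provides $\delta,\epsilon>0$ such that $\|g\|_{s-1}<\delta$ and $d(\psi,\varphi_\bullet^{-1})<\delta$ imply $\|g\circ\psi\|_{s-1}<\epsilon$, and scaling $g$ by linearity promotes this to $\|g\circ\psi\|_{s-1}\leq(2\epsilon/\delta)\|g\|_{s-1}$ for all $g$ and all such $\psi$. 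Composing with the continuity of inversion translates this into the claimed bound for $\varphi$ near $\varphi_\bullet$. Combined with the multiplication $H^{s-1}\times H^{s-1}\to H^{s-1}$ (valid since $s-1>n/2$), each of the four summands above is bounded by a constant multiple of $\|f\|_{s-1}$, uniformly on $U_1\cap U_2$.

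For the lower bound I invert the transformation algebraically. Substituting $x=\varphi(y)$ in the expression for $F$ and using $d\varphi^{-1}(\varphi(y))=(d\varphi(y))^{-1}$ gives $F(\varphi(y)) = (d\varphi(y))^{-\top} f(y) (d\varphi(y))^{-1}$; multiplying by $d\varphi(y)^\top$ on the left and $d\varphi(y)$ on the right then yields
\[
f(y) = d\varphi(y)^\top \cdot F(\varphi(y)) \cdot d\varphi(y),
\]
which has precisely the same structure as the formula for $F$, with $\varphi$ and $d\varphi-I$ in place of $\varphi^{-1}$ and $d\varphi^{-1}-I$. The argument of the previous paragraph applies verbatim — more directly in fact, since no inversion map is now involved — and gives $\|f\|_{s-1}\leq C'\|F\|_{s-1}$ uniformly on a possibly smaller neighborhood of $\varphi_\bullet$.

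The main obstacle is establishing the local uniform operator bound $\|f\circ\psi\|_{s-1}\leq M\|f\|_{s-1}$: the reference \cite{composition} supplies only joint continuity of composition, not norm-continuity of the assignment $\psi\mapsto(f\mapsto f\circ\psi)$, so one must exploit the linearity in $f$ through the rescaling trick above to upgrade joint continuity into a uniform operator bound on a neighborhood.
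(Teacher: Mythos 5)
Your proof is correct. The paper itself omits the proof of this lemma (deferring to the thesis \cite{thesis}), but your route --- reading the expression pointwise as $d(\varphi^{-1})^\top\,(f\circ\varphi^{-1})\,d(\varphi^{-1})$, splitting off the identity from $d\varphi^{-1}$, combining the Banach algebra property of $H^{s-1}$ ($s-1>n/2$) with a locally uniform operator bound for $R_{\varphi^{-1}}$ obtained from the joint continuity of \eqref{composition} via the rescaling trick, and then getting the lower bound by observing that the inverse transformation $f = d\varphi^\top\,(F\circ\varphi)\,d\varphi$ has the identical structure --- is exactly the argument the paper evidently intends, as its source even contains a commented-out auxiliary lemma stating the locally uniform two-sided bound $\tfrac1C\|f\|_{s'}\le\|R_\varphi f\|_{s'}\le C\|f\|_{s'}$ that your second paragraph establishes.
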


\section{Proof of Theorem \ref{th_nonuniform} and  Theorem \ref{th_nondifferentiable}}\label{section_proofs}

Before we prove the theorems, we have to make some preparation. Throughout this section we assume $n \geq 2$ and $s > n/2+1$.\\
First of all we can reduce the proofs to the case $T=1$. This follows from the scaling property of \eqref{E}. In fact, denoting $\phi = \left. E_T \right|_{T=1}$ we have
\begin{equation}\label{scaling}
 E_T(u_0) = T \phi(T u_0),\quad \forall T > 0.
\end{equation}

So the proof of Theorem \ref{th_nonuniform} reduces to

\begin{Prop}\label{prop_nonuniform}
Denote by $\phi$ the map $\left. E_T \right|_{T=1}$ and by $U$ the domain $\left. U_T \right|_{T=1}$. Then
\[
 \phi:U \to H^s_\sigma(\R^n;\R^n)
\]
is nowhere locally uniformly continuous.
\end{Prop}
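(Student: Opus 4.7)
Fix $v_\bullet \in U$ and $\rho > 0$ with $B_\rho(v_\bullet) \subseteq U$. The task is to produce sequences $(u_k), (\tilde u_k) \subset B_\rho(v_\bullet)$ with $\|u_k - \tilde u_k\|_s \to 0$ but $\liminf_k \|\phi(u_k) - \phi(\tilde u_k)\|_s > 0$.

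\textbf{Reduction to a vorticity lower bound.} Combining the Biot--Savart estimate (Lemma~\ref{lemma_biot_savart_estimate}) with the trivial bound $\|\Omega(u)\|_{s-1} \leq 2\|du\|_{s-1}$ gives a uniform equivalence between $\|du\|_{s-1}$ and $\|\Omega(u)\|_{s-1}$ on $H^s_\sigma(\R^n;\R^n)$. Setting $\varphi_k := \exp(u_k)$, $\tilde \varphi_k := \exp(\tilde u_k)$ and
\[
T_\varphi(f) := R_\varphi^{-1}\big((d\varphi^{-1})^\top \cdot f \cdot d\varphi^{-1}\big),
\]
the conservation law (Lemma~\ref{lemma_conservation_law}) reads $\Omega(\phi(u_k)) = T_{\varphi_k}(\Omega(u_k))$. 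Shrinking $\rho$ and invoking continuity of $\exp$ (Proposition~\ref{prop_exp}), Lemma~\ref{lemma_uniform_estimate} provides a constant $C > 0$ with $\tfrac{1}{C}\|f\|_{s-1} \leq \|T_\varphi f\|_{s-1} \leq C\|f\|_{s-1}$ for every $\varphi \in \exp(B_\rho(v_\bullet))$. It thus suffices to exhibit sequences for which
\[
\liminf_{k \to \infty} \big\|T_{\varphi_k}\Omega(u_k) - T_{\tilde \varphi_k}\Omega(\tilde u_k)\big\|_{s-1} > 0.
\]
A first structural observation is that neither $(u_k)$ nor $(\tilde u_k)$ may converge in $H^s$: otherwise continuity of $\exp$ and of composition on $\Ds^s(\R^n)$ would drive the left-hand side to zero.

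\textbf{Construction.} I will use a Himonas--Misiolek-type high-frequency ansatz, adapted to the base-point $v_\bullet$. Pick a smooth compactly supported $\chi$ and let $\zeta_k \in H^s_\sigma$ be a divergence-free field whose vorticity has, as its principal component, $c\,k^{-s+1}\chi(x)\sin(k x_1)$ for some small $c > 0$; a direct scaling check shows that $\|\zeta_k\|_s$ and $\|\Omega(\zeta_k)\|_{s-1}$ are both comparable to $c\|\chi\|_{L^2}$, while $\|\zeta_k\|_{L^\infty} + \|d\zeta_k\|_{L^\infty} = O(k^{-s+1}) \to 0$; choose $c$ so that $\|\zeta_k\|_s \leq \rho/4$ uniformly in $k$. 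Let $\alpha_k \in H^s_\sigma$ be a small divergence-free perturbation with $\|\alpha_k\|_s = O(1/k)$, designed so that the difference $\varphi_k^{-1} - \tilde \varphi_k^{-1}$ carries an $\sim 1/k$ component in the $e_1$-direction on $\operatorname{supp}\chi$. Set
\[
u_k := v_\bullet + \zeta_k, \qquad \tilde u_k := v_\bullet + \zeta_k + \alpha_k.
\]
Then $u_k, \tilde u_k \in B_\rho(v_\bullet)$ and $\|u_k - \tilde u_k\|_s = O(1/k) \to 0$. Decomposing
\[
T_{\varphi_k}\Omega(u_k) - T_{\tilde \varphi_k}\Omega(\tilde u_k) = (T_{\varphi_k} - T_{\tilde \varphi_k})\Omega(u_k) - T_{\tilde \varphi_k}\Omega(\alpha_k),
\]
the second term is $O(1/k)$ in $H^{s-1}$ by the upper bound of Lemma~\ref{lemma_uniform_estimate}. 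In the first, the frequency-$k$ part $\Omega(\zeta_k)$ composed with two inverse flows differing at scale $1/k$ on $\operatorname{supp}\chi$ produces an $H^{s-1}$-contribution of order $ck^{-s+1} \cdot k^{s-1} \cdot |\sin 1| = O(1)$, bounded below uniformly in $k$.

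\textbf{Main obstacle.} The two delicate analytical steps are:\ (i) constructing $\alpha_k$ so that $\varphi_k^{-1} - \tilde \varphi_k^{-1}$ indeed exhibits the claimed $\sim 1/k$ displacement in the $e_1$-direction throughout $\operatorname{supp}\chi$, which amounts to a first-variation analysis of $\exp$ around $v_\bullet + \zeta_k$ using the real analyticity supplied by Proposition~\ref{prop_exp} and uniform-in-$k$ control on $D\exp$ in a direction transverse to $\operatorname{supp}\chi$; and (ii) showing that the low-frequency contribution $(T_{\varphi_k} - T_{\tilde \varphi_k})\Omega(v_\bullet)$ does not spoil the lower bound---here one exploits the $C^1$-smallness of $\zeta_k$, so that $\varphi_k$ and $\tilde \varphi_k$ both remain $C^1$-close to $\exp(v_\bullet)$ and the fixed function $\Omega(v_\bullet) \in H^{s-1}$ is transported in a nearly identical way by the two flows. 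Step (i) is the heart: it is the quantitative incarnation, via the exponential map and Lemma~\ref{lemma_uniform_estimate}, of the known failure of uniform continuity of composition $H^{s-1} \times \Ds^s \to H^{s-1}$ at the matched scales ``frequency $k$ vs.\ displacement $1/k$''. Once (i) and (ii) are established, the Eulerian lower bound $\liminf_k \|\phi(u_k) - \phi(\tilde u_k)\|_s > 0$ follows by running the reduction in reverse through Lemma~\ref{lemma_uniform_estimate} and Biot--Savart.
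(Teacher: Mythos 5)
Your reduction to a lower bound on the transported vorticities is exactly the paper's: Biot--Savart (Lemma \ref{lemma_biot_savart_estimate}), the conservation law (Lemma \ref{lemma_conservation_law}), and the two-sided estimate of Lemma \ref{lemma_uniform_estimate} are combined in the same way. The construction of the sequences, however, is where the argument stops being a proof. Both of the steps you label (i) and (ii) are asserted rather than established, and (i) is the entire difficulty: you need an $\alpha_k$ with $\|\alpha_k\|_s = O(1/k)$ whose effect on the flow is a displacement of size comparable to $1/k$, in a fixed direction, \emph{uniformly over} $\operatorname{supp}\chi$, with the resulting phase shift $k\cdot(\mbox{displacement})$ bounded away from integer multiples of $2\pi$. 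That is a quantitative first- and second-order analysis of $\exp$ near the non-convergent sequence $v_\bullet+\zeta_k$ which you do not carry out, and no lemma in the paper supplies it: Lemma \ref{lemma_dexp} gives a pointwise lower bound on $d\exp$ at a single far-away point, not a uniform directional displacement over a fixed compact set. Even granting (i), the bound ``$ck^{-s+1}\cdot k^{s-1}\cdot|\sin 1|$'' is a frequency-times-amplitude heuristic; converting it into an honest $H^{s-1}$ lower bound for $f\circ\varphi_k^{-1}-f\circ\tilde\varphi_k^{-1}$ with $f$ oscillating at frequency $k$ and $s$ non-integer is itself real work.

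Step (ii) also hides a trap. You justify the smallness of $(T_{\varphi_k}-T_{\tilde\varphi_k})\Omega(v_\bullet)$ by ``$C^1$-closeness'', but what is needed is $\|\Omega(v_\bullet)\circ\varphi_k^{-1}-\Omega(v_\bullet)\circ\tilde\varphi_k^{-1}\|_{s-1}\to 0$ along diffeomorphisms that do \emph{not} converge in $\Ds^s(\R^n)$ (since $\zeta_k$ has no $H^s$-limit); pointwise continuity of composition does not give this, and uniform continuity of composition is precisely the kind of statement that fails here. It can be rescued by approximating $\Omega(v_\bullet)$ in $H^{s-1}$ by a smooth compactly supported function and using $\varphi_k-\tilde\varphi_k\to 0$ in $H^s$, but you must say so. The paper's construction avoids both issues: it concentrates $w_k$ on shrinking balls around a point $x^\ast$ whose image stays far from $K'=$ (a neighborhood of the image of $\operatorname{supp}\bar u_0$), uses Lemma \ref{lemma_dexp} to produce a perturbation $v_k$ that moves the single point $\varphi_k(x^\ast)$ by more than the diameter of $\varphi_k\big(B_{\rho_k}(x^\ast)\big)$, and then obtains the lower bound from disjointness of supports together with $\|\Omega(w_k)\|_{s-1}\gtrsim R$ (which follows since $\|w_k\|_{L^2}\to 0$ while $\|w_k\|_s=R/4$) --- no oscillation, no phase computation, and no transport estimate on $\Omega(v_\bullet)$ is ever required. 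As written, your text is a program for a Himonas--Misiolek-style proof, not a proof.
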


\begin{proof}[Proof of Theorem \ref{th_nonuniform}]
Follows from Proposition \ref{prop_nonuniform} and \eqref{scaling}.
\end{proof}

In the sequel we use $C_{c,\sigma}^\infty(\R^n;\R^n)$ for the space of smooth and divergence-free vector fields with compact support, i.e.
\[
 C_{c,\sigma}^\infty(\R^n;\R^n)=\big\{ f \in C^\infty(\R^n;\R^n) \; \big| \; \operatorname{div} f=0 \mbox{ and } \operatorname{supp} f \mbox{ compact} \big\}
\]
where $\operatorname{supp} f$ denotes the support of $f$. Note that $C_{c,\sigma}^\infty(\R^n;\R^n) \subseteq H^s_\sigma(\R^n;\R^n)$ is dense -- see Inci \cite{thesis}. The proof of the following (technical) lemma can be found in Inci \cite{thesis} -- the estimates are based on the representation given by the Biot-Savart law as described in Lemma \ref{lemma_biot_savart}.

\begin{Lemma}\label{lemma_dexp} 
Let $U\equiv \left. U_T \right|_{T=1}$ and $u_0 \in U \cap C^\infty_{c,\sigma}(\R^n;\R^n)$. Consider the restriction of the differential of $\exp$ at $u_0$ to $H^s_\sigma(\R^n;\R^n)$, 
\[
 d_{u_0}\exp:H_\sigma^s(\R^n;\R^n) \to H^s(\R^n;\R^n),\quad v_0 \mapsto \left. \partial_\varepsilon \right|_{\varepsilon=0} \exp(u_0 + \varepsilon v_0).
\]
Then there exists $m > 0$ with the following property: For any $R > 0$ there exists $v \in C^\infty_{\sigma,c}(\R^n;\R^n)$ with $|\big(d_{u_0}\exp(v)\big)(x^\ast)| \geq m$, $||v||_s=1$ and support in the ball $B_1(x^\ast)=\{ x \in \R^n \;|\; |x-x^\ast| < 1 \}$ for some $x^\ast \in \R^n$ with $|x^\ast| \geq R$.
\end{Lemma}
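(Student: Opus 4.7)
The plan is to pick a template $v_0 \in C_{c,\sigma}^\infty(\R^n;\R^n)$, once and for all, supported in $B_1(0)$, with $\|v_0\|_s = 1$ and $v_0(0) \neq 0$ (easily arranged, e.g.\ for $n=2$ by $v_0 = \nabla^\perp\psi$ for a suitable $\psi \in C_c^\infty(B_1(0))$ with $\nabla\psi(0)\neq 0$; analogous vector-potential constructions work in higher dimensions), and set $m := |v_0(0)|/2$. For a given $R$, I will take $v(x) := v_0(x - x^*)$ for $|x^*| \geq R$ sufficiently large; translation is an isometry on $H^s$, so $\|v\|_s = 1$ and $\operatorname{supp} v \subset B_1(x^*)$ automatically. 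The lemma then reduces to the asymptotic claim $(d_{u_0}\exp(v))(x^*) \to v_0(0)$ as $|x^*| \to \infty$.

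\textbf{Variational ODE and base-flow decay.} Let $u^\epsilon$ solve \eqref{E} with initial data $u_0 + \epsilon v$ and let $\varphi^\epsilon(t)$ be its Lagrangian flow, so $\exp(u_0+\epsilon v) = \varphi^\epsilon(1)$; set $\varphi(t) := \exp(tu_0)$. Differentiating $\partial_t\varphi^\epsilon = u^\epsilon \circ \varphi^\epsilon$ in $\epsilon$ at $0$, the variation $Y(t,x) := \partial_\epsilon|_{\epsilon=0}\varphi^\epsilon(t,x)$ solves
\[
\partial_t Y(t,x) = w(t,\varphi(t,x)) + du(t,\varphi(t,x))\,Y(t,x), \quad Y(0,x) = 0,
\]
where $w := \partial_\epsilon u^\epsilon|_{\epsilon=0}$ solves the linearized Euler equation $\partial_t w + (u\cdot\nabla)w + (w\cdot\nabla)u + \nabla q = 0$, $\operatorname{div} w = 0$, $w(0) = v$; the target is $Y(1,x^*)$. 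By Lemma \ref{lemma_conservation_law} the base vorticity $\Omega(t)$ stays supported in a fixed ball $B_A$ for $t \in [0,1]$; Biot-Savart (Lemma \ref{lemma_biot_savart}) and its $x$-derivatives then give pointwise decay $|u(t,y)| + |du(t,y)| + \cdots = O(|y|^{-(n-1)})$ uniformly in $t$ for $|y| \geq 2A$. Gr\"onwall applied to the flow equation shows that points starting in $B_1(x^*)$ remain in $B_2(x^*)$ for $|x^*|$ large, with $|\varphi(t,x) - x| = O(|x^*|^{-(n-1)})$ uniformly.

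\textbf{Localizing the linearized velocity -- the main obstacle.} The crucial technical step is to show that $\tilde w := w - v$ is small. Since $v$ is divergence-free and time-independent,
\[
\partial_t \tilde w + (u\cdot\nabla)\tilde w + (\tilde w\cdot\nabla)u + \nabla q = -(u\cdot\nabla) v - (v\cdot\nabla) u, \quad \operatorname{div}\tilde w = 0, \quad \tilde w(0) = 0.
\]
The right-hand side is supported in $B_1(x^*)$ where, by the previous step, $u$ and its derivatives of positive order are $O(|x^*|^{-(n-1)})$; together with the $x^*$-independent $C^s$-seminorms of $v$, this forces the source to have $H^{s-1}$-norm $O(|x^*|^{-(n-1)})$. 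Applying the Leray projector $P$ (bounded on $H^{s-1}$) to remove $\nabla q$ and running the standard commutator-based $H^{s-1}$ energy estimate for linearized Euler along the base $u$ then yields $\|\tilde w(t)\|_{s-1} \leq C(u_0)\,|x^*|^{-(n-1)}$ on $[0,1]$, hence $\|\tilde w(t)\|_{L^\infty} \to 0$ by the Sobolev embedding $H^{s-1}\hookrightarrow L^\infty$ (valid since $s - 1 > n/2$). I expect this energy estimate to be the hardest point, since the nonlocal pressure must be controlled with care; the key input is the Biot-Savart far-field decay of $u$ established above, in line with the reference to \cite{thesis}.

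\textbf{Conclusion.} Integrating the variational ODE at $x^*$ and absorbing $du(t,\varphi(t,x^*))\,Y(t,x^*)$ via Gr\"onwall (its coefficient decays like $|x^*|^{-(n-1)}$) gives $Y(1,x^*) = \int_0^1 w(t,\varphi(t,x^*))\,dt + o(1)$. Splitting $w = v + \tilde w$, the $\tilde w$-contribution is $o(1)$ by the previous step, while $v(\varphi(t,x^*)) = v_0(\varphi(t,x^*) - x^*) \to v_0(0)$ as $|x^*| \to \infty$ by continuity of $v_0$ and $|\varphi(t,x^*) - x^*| \to 0$. Therefore $(d_{u_0}\exp(v))(x^*) \to v_0(0)$, so $|d_{u_0}\exp(v)(x^*)| \geq m$ once $|x^*|$ is sufficiently large, which proves the lemma.
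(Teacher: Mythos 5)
The paper does not actually contain a proof of Lemma~\ref{lemma_dexp}: it is deferred to the thesis \cite{thesis}, with only the remark that ``the estimates are based on the representation given by the Biot--Savart law as described in Lemma~\ref{lemma_biot_savart}.'' So a line-by-line comparison is impossible; what can be said is that your mechanism is exactly the one the paper hints at. Your overall architecture --- fix one divergence-free template $v_0$ with $v_0(0)\neq 0$, translate it to a far-away center $x^\ast$ (an $H^s$-isometry preserving $\operatorname{div}v=0$ and the support condition), use Lemma~\ref{lemma_conservation_law} to keep the base vorticity in a fixed compact set, and then exploit the $O(|y|^{-(n-1)})$ far-field decay of $u$ and $du$ from Lemma~\ref{lemma_biot_savart} to show that near $x^\ast$ the flow is almost the identity and the linearization is almost trivial, so that $\big(d_{u_0}\exp(v)\big)(x^\ast)\to v_0(0)$ --- is coherent, yields the required $R$-independent constant $m=|v_0(0)|/2$, and I see no step in it that is wrong in substance.

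The one point I would press you on, because it is delicate precisely in the context of this paper, is your unexamined use of $w:=\partial_\varepsilon u^\varepsilon|_{\varepsilon=0}$ as a solution of the linearized Euler equation. Theorem~\ref{th_nondifferentiable} says the Eulerian solution map is \emph{nowhere} differentiable on $H^s_\sigma$, so the existence of $w$ in a useful topology cannot be waved through; it holds here only because $u_0$ and $v$ are smooth and compactly supported (equivalently, one accepts a loss of one derivative: $\varepsilon\mapsto u^\varepsilon(t)=\partial_t\varphi^\varepsilon(t)\circ\varphi^\varepsilon(t)^{-1}$ is $C^1$ into $H^{s-1}$ because composition and inversion are $C^1$ with a one-derivative loss), and the interchange $\partial_\varepsilon\partial_t\varphi^\varepsilon=\partial_t\partial_\varepsilon\varphi^\varepsilon$ together with the chain rule for $\varepsilon\mapsto u^\varepsilon\circ\varphi^\varepsilon$ needs this justification. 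The cleaner route, which avoids $w$ entirely, is to stay in Lagrangian variables where $\exp$ is real analytic (Proposition~\ref{prop_exp}) and differentiate the Lagrangian ODE directly. Your $H^{s-1}$ energy estimate for $\tilde w=w-v$ is then consistent with this derivative loss and is standard (Leray projection plus Kato--Ponce commutators, with the top-order transport term cancelled by $\operatorname{div}u=0$); you correctly flag it as the technical core. With the existence and regularity of $w$ supplied, the argument closes.
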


For $f_\bullet$ in $H^s_\sigma(\R^n;\R^n)$ we denote by $B_R(f_\bullet) \subseteq H^s_\sigma(\R^n;\R^n)$ the open ball of radius $R > 0$ with center $f_\bullet$, i.e.
\[
 B_R(f_\bullet) = \{ f \in H_\sigma^s(\R^n;\R^n) \; | \; ||f-f_\bullet||_s < R \}.
\] 

Now we can give the proof of Proposition \ref{prop_nonuniform}, copied from Inci \cite{thesis}.

\begin{proof}[Proof of Proposition \ref{prop_nonuniform}]
It suffices to show that for any $u_0$ in the domain $U \subseteq H^s_\sigma(\R^n;\R^n)$ of $\phi$ there exists $R_\ast > 0$ with $B_{R_\ast}(u_0) \subseteq U$ so that $\phi$ is not uniformly continuous on $B_R(u_0)$ for any $0 < R \leq R_\ast$. As  $s > n/2+1$, $H^s(\R^n;\R^n) \hookrightarrow C^1_0(\R^n;\R^n)$. We denote by $C > 0$ the constant of this imbedding
\begin{equation}\label{sobolev_constant}
||f||_{C^1} \leq C ||f||_s.
\end{equation}
By the continuity of the exponential map (Proposition \ref{prop_exp}), there exists $R_0 > 0$ so that $B_{R_0}(u_0) \subseteq U$ and for any $\varphi,\psi \in \exp\big(B_{R_0}(u_0)\big)$
\[
 ||\varphi - \psi||_s < \frac{1}{C}.
\]
Hence by \eqref{sobolev_constant} there is a constant $L > 0$ so that for any $\varphi,\psi \in \exp\big(B_{R_0}(u_0)\big)$
\begin{equation}\label{less_one}
 |\varphi(x)-\psi(x)| < 1 \quad \mbox{and} \quad |\varphi(x)-\varphi(y)| < L |x-y|, \quad \forall x,y \in \R^n.
\end{equation}
By the smoothness of the exponential map (Proposition \ref{prop_exp}) and Taylor's theorem, for any $v,v+h$ in an arbitrary convex subset $V \subseteq U$,
\[
 \exp(v+h)=\exp(v)+d_v\exp(h) + \frac{1}{2} \int_0^1 (1-t) d^2_{v+th}\exp(h,h)\,dt.
\]
By choosing $0 < R_1 \leq R_0$, smaller if necessary, we can ensure that for some $C_1 > 0$ and $\forall v \in B_{R_1}(u_0), h \in B_{R_1}(0)$

\begin{equation}\label{remainder}
||\exp(v+h)-\exp(v) - d_v \exp(h)||_s \leq C_1 ||h||_s^2.
 \end{equation}
As $v \mapsto d_v \exp$ is continuous we get for some $0 < R_2 \leq R_1$ and $\forall v_1,v_2 \in B_{R_2}(u_0), h \in H^s_\sigma(\R^n;\R^n)$.

\begin{equation}\label{near_differential}
||d_{v_1} \exp(h) - d_{v_2}\exp(h)||_s \leq \frac{m}{4C} ||h||_s,
\end{equation}
where $m > 0$ is the constant in the statement of Lemma \ref{lemma_dexp} and $C > 0$ given by \eqref{sobolev_constant}. Finally by choosing $0 < R_3 \leq R_2$, sufficiently small, Lemma \ref{lemma_uniform_estimate} implies that there exists $C_2 > 0$ so that
\begin{equation}\label{vort_estimate}
 \frac{1}{C_2} ||f||_{s-1} \leq ||R_\varphi^{-1}\left([d\varphi^\top]^{-1} f [d\varphi]^{-1}\right)||_{s-1} \leq C_2 ||f||_{s-1}
\end{equation}
for any $f \in H^{s-1}(\R^n;\R^{n\times n})$ and any $\varphi \in \exp\big(B_{R_3}(u_0)\big)$. Now set $R_\ast=R_3$ and take any $0 < R \leq R_\ast$. By the density of $C^\infty_{\sigma,c}(\R^n;\R^n)$ in $H^s_\sigma(\R^n;\R^n)$, there exists $\bar u_0 \in C^\infty_{\sigma,c}(\R^n;\R^n) \cap B_{R/4}(u_0)$. Let $\varphi_\bullet:=\exp(\bar u_0)$ and introduce $K:=\operatorname{supp} \bar u_0$ and
\[
 K'=\{ y \in \R^n \, | \, \operatorname{dist}\big(y,\varphi_\bullet(K)\big) \leq 1 \}
\]
where $\operatorname{dist}\big(y,\varphi_\bullet(K)\big) = \inf_{x \in K} |y - \varphi_\bullet(x)|$ is the distance of $y$ to the set $\varphi_\bullet(K)$. By \eqref{less_one} we see that $K'$ has the property
\begin{equation}\label{in_k_strich}
 \varphi(K) \subseteq K', \qquad \forall \varphi \in \exp\big(B_R(u_0)\big)
\end{equation}
Note that $\lim_{|x| \to \infty} |\varphi_\bullet(x)|=\infty$. By Lemma \ref{lemma_dexp} we then can choose $x^\ast \in \R^n \setminus K'$ and $v \in C^\infty_{\sigma,c}(\R^n;\R^n)$ with $||v||_s=1$ in such a way that
\begin{equation}\label{v_property}
\operatorname{dist}\big(\varphi_\bullet(x^\ast),K'\big) > L + 1 \quad \mbox{and} \quad |\big(d_{\bar u_0} \exp(v)\big)(x^\ast)| \geq m.
\end{equation}
We set $M:=|\big(d_{\bar u_0}\exp(v)\big)(x^\ast)|$ and define $v_k=\frac{R}{4k}v$, $k \geq 1$. As $||v||_s=1$
\begin{equation}\label{v_k_in_ball}
 ||v_k||_s = \frac{R}{4k} < R/3.
\end{equation}
By the definition of $v_k$ we have $|\big(d_{\bar u_0}\exp(v_k)\big)(x^\ast)|=\delta_k:=M\frac{R}{4k}$. By \eqref{less_one} for any $k \geq 1$ there is 
\begin{equation}\label{rho_k_def}
0 < \rho_k < \min(\delta_k/4,1)=\min(\frac{MR}{16k},1)
\end{equation}
such that
\begin{equation}\label{disjoint_support}
 \varphi\big(B_{\rho_k}(x^\ast)\big) \subseteq B_{\delta_k/4}\big(\varphi(x^\ast)\big) \quad \forall \varphi \in \exp\big(B_R(u_0)\big).
\end{equation}
Now choose for each $k \geq 1$, a $w_k \in C^\infty_{\sigma,c}(\R^n;\R^n)$ with 
\begin{equation}\label{w_k_support}
 \operatorname{supp} w_k \subseteq B_{\rho_k}(x^\ast) \quad \mbox{and} \quad ||w_k||_s=R/4
\end{equation}
and define for $k \geq 1$ the pair of initial values
\[
 u_{0,k}=\bar u_0 + w_k \quad \mbox{and} \quad \tilde u_{0,k}=u_{0,k} + v_k.
\]
By our choices $(u_{0,k})_{k \geq 1},(\tilde u_{0,k})_{k \geq 1} \subseteq B_R(u_0)$ and $||u_{0,k} - \tilde u_{0,k}||_s = ||v_k||_s \to 0$ as $k \to \infty$. Denote the diffeomorphims corresponding to $u_{0,k},\tilde u_{0,k}$ by $\varphi_k,\tilde \varphi_k \in \Ds^s(\R^n)$, 
\[
 \varphi_k = \exp(u_{0,k}) \quad \mbox{and} \quad \tilde \varphi_k =\exp(\tilde u_{0,k})
\]
and the solutions of \eqref{RE} corresponding to the initial values $u_{0,k},\tilde u_{0,k}$ by $u_k,\tilde u_k:[0,1] \to H_\sigma^s(\R^n;\R^n)$. The corresponding vorticities at time $t=0$, $\Omega_{0,k}$ and $\tilde \Omega_{0,k}$, and $t=1$, $\Omega_{1,k}$ and $\tilde \Omega_{1,k}$, are then given by 
\begin{equation}\label{vorticities}
\begin{array}{ccccc}
 \Omega_{0,k} & = &\Omega(u_{0,k})&=&\Omega(\bar u_0)+\Omega(w_k) \\
 \tilde \Omega_{0,k} &= &\Omega_{0,k} + \Omega(v_k)&=&\Omega(\bar u_0)+ \Omega(w_k + v_k)
\end{array}
\end{equation}
and
\[
 \Omega_{1,k}=\Omega\big(u_k(1)\big);\quad \tilde \Omega_{1,k} = \Omega\big(\tilde u_k(1)\big).
\]
Note that we have for some $C' > 0$
\begin{equation}\label{u_omega_ineq}
||\phi(u_{0,k})-\phi(\tilde u_{0,k})||_s = ||u_k(1) - \tilde u_k(1)||_s \geq \frac{1}{C'} ||\Omega_{1,k}-\tilde \Omega_{1,k}||_{s-1}.
\end{equation}
We aim at estimating $||\Omega_{1,k}-\tilde \Omega_{1,k}||_{s-1}$ from below. By the conservation law \eqref{conserved} we have
\begin{equation}\label{vorticity_reexpressed}
 \Omega_{1,k}=R_{\varphi_k}^{-1} \left([d\varphi_k^\top]^{-1} \Omega_{0,k} [d\varphi_k]^{-1}\right),\; 
 \tilde \Omega_{1,k} = R_{\tilde \varphi_k}^{-1} \left([d\tilde \varphi_k^\top]^{-1} \tilde \Omega_{0,k} [d\tilde \varphi_k]^{-1}\right).
\end{equation}
By \eqref{v_property} the distance of $\varphi_\bullet(x^\ast)$ to $K'$ is bigger than $L+1$ and hence by \eqref{less_one} 
\[
\operatorname{dist}\big(\varphi(x^\ast),K') > L \quad  \mbox{for any } \varphi \in \exp\big(B_R(u_0)\big).
\]
On the other hand by \eqref{less_one} and $\rho_k < 1$ one has 
\[
 |\varphi(x^\ast)-\varphi(x)| \leq L |x^\ast - x| \leq L \quad \forall x \in \operatorname{supp} w_k.
\]
Combining the two latter displayed inequalities one concludes that
\begin{equation}\label{support_contained1}
 \varphi\big(\operatorname{supp}(w_k)\big) \cap K' =\emptyset, \quad \forall \varphi \in \exp\big(B_R(u_0)\big).
\end{equation}
As $\operatorname{supp}(w_k + v_k) \subseteq B_1(x^\ast)$ the same argument gives
\begin{equation}\label{support_contained2}
 \varphi\big(\operatorname{supp}(w_k + v_k)\big) \cap K' = \emptyset, \quad \forall \varphi \in \exp\big(B_R(u_0)\big).
\end{equation}
By \eqref{in_k_strich},
\[
 \operatorname{supp} R_{\varphi_k}^{-1}\left((d\varphi_k^\top)^{-1} \Omega(\bar u_0) (d\varphi_k)^\top \right) \subseteq K'
\]
and
\[
 \operatorname{supp} R_{\tilde \varphi_k}^{-1}\big((d \tilde \varphi_k^\top)^{-1} \Omega(\bar u_0) (d\tilde \varphi_k)^\top \big) \subseteq K'.
\]
From \eqref{support_contained1}-\eqref{support_contained2},
\[
  \varphi_k\big(\operatorname{supp} \Omega(w_k)\big) \subseteq \R^n\setminus K'
\quad \mbox{and} \quad 
 \tilde \varphi_k \big(\operatorname{supp} \Omega(w_k+v_k)\big) \subseteq \R^n \setminus K'.
\]
By \eqref{vorticities}-\eqref{vorticity_reexpressed} it then follows that
\begin{multline}\label{from_below}
||\Omega_{1,k}-\tilde \Omega_{1,k}||_{s-1} = \\
||R_{\varphi_k}^{-1}\left((d\varphi_k^\top)^{-1} \Omega(\bar u_0) (d\varphi_k)^{-1}\right)- R_{\tilde \varphi_k}^{-1}\left((d\tilde \varphi_k^\top)^{-1} \Omega(\bar u_0) (d\tilde \varphi_k)^{-1}\right)||_{s-1} \\
+ ||R_{\varphi_k}^{-1} \left((d\varphi_k^\top)^{-1} \Omega(w_k) (d\varphi_k)^{-1}\right) - R_{\tilde \varphi_k}^{-1} \left((d\tilde \varphi_k^\top)^{-1} \Omega(w_k+v_k) (d\tilde \varphi_k)^{-1}\right)||_{s-1} \\
\geq ||R_{\varphi_k}^{-1} \left((d\varphi_k^\top)^{-1} \Omega(w_k) (d\varphi_k)^{-1}\right) - R_{\tilde \varphi_k}^{-1} \left((d\tilde \varphi_k^\top)^{-1} \Omega(w_k+v_k) (d\tilde \varphi_k)^{-1}\right)||_{s-1}
\end{multline}
We claim that, for large $k$,
\begin{equation}\label{show_disjoint_support}
\varphi_k\big(\operatorname{supp}(w_k)\big) \cap \tilde \varphi_k\big(\operatorname{supp}(w_k)\big) = \emptyset.
\end{equation}
Indeed by the Taylor formula 
\[
 \tilde \varphi_k - \varphi_k = \exp(\bar u_0 + w_k + v_k) - \exp(\bar u_0 + w_k) =
 d_{\bar u_0 + w_k}\exp(v_k) + \mathcal R_k
\]
where $\mathcal R_k$ is the remainder term. Thus we can write
\begin{equation}\label{diffeo_difference}
\tilde \varphi_k - \varphi_k = d_{\bar u_0} \exp(v_k) + \left( d_{\bar u_0+ w_k}\exp(v_k) - d_{\bar u_0} \exp(v_k)\right) + \mathcal R_k.
\end{equation}
We want to estimate $\tilde \varphi(x^\ast)-\varphi(x^\ast)$ by estimating the three terms on the right-hand side of the latter identity individually. By the Sobolev imbedding \eqref{sobolev_constant} and \eqref{remainder} we get the following estimate for $\mathcal R_k(x^\ast) \in \R^n$
\[
 |\mathcal R_k(x^\ast)| \leq C ||\mathcal R_k||_s \leq C C_1 ||v_k||_s^2 = C C_1 \frac{R^2}{16k^2}.
\]
For $k$ sufficiently large it then follows that
\[
 |\mathcal R_k(x^\ast)| < \frac{\delta_k}{4}.
\]
Furthermore, using \eqref{sobolev_constant} and \eqref{near_differential}, together with $m \leq M$ (cf \eqref{v_property})
\begin{multline*}
 \big| \big(d_{\bar u_0 + w_k}\exp(v_k)\big)(x^\ast) - \big(d_{\bar u_0}\exp(v_k)\big)(x^\ast) \big| \\
\leq C ||d_{\bar u_0 + w_k}\exp(v_k) - d_{\bar u_0}\exp(v_k)||_s  \leq \frac{m}{4} ||v_k||_s \leq \frac{M R}{16k} = \frac{\delta_k}{4}.
\end{multline*}
Finally, for the first term on the right-hand side of \eqref{diffeo_difference} one has by definition,
\[
 \big|d_{\bar u_0} \exp(v_k)(x^\ast)\big| = \delta_k.
\]
Combining the estimates above, \eqref{diffeo_difference} yields for $k$ large enough
\[
 |\tilde \varphi_k(x^\ast) - \varphi_k(x^\ast)| > \frac{\delta_k}{2}.
\]
By \eqref{disjoint_support} we get for large $k$
\[
 \varphi_k\big(B_{\rho_k}(x^\ast)\big) \cap \tilde \varphi_k\big(B_{\rho_k}(x^\ast)\big) = \emptyset
\]
showing \eqref{show_disjoint_support}. It leads by the triangle inequality to the estimate
\begin{multline}\label{limsup1}
 ||R_{\varphi_k}^{-1} \left([d\varphi_k^\top]^{-1} \Omega(w_k) [d\varphi_k]^{-1} \right) - R_{\tilde \varphi_k}^{-1} \left([d\tilde \varphi_k]^{-1} \Omega(w_k + v_k) [d\tilde \varphi_k]^{-1} \right)||_{s-1} \\
 \geq ||R_{\varphi_k}^{-1} \left([d\varphi_k^\top]^{-1} \Omega(w_k) [d\varphi_k]^{-1}\right)||_{s-1} + ||R_{\tilde \varphi_k}^{-1} \left([d\tilde\varphi_k^\top]^{-1} \Omega(w_k) [d\tilde\varphi_k]^{-1}\right)||_{s-1}\\
 - ||R_{\tilde \varphi_k}^{-1} \left([d\tilde\varphi_k^\top]^{-1} \Omega(v_k) [d\tilde\varphi_k]^{-1}\right)||_{s-1}.
\end{multline}
The latter term we can be estimated using \eqref{vort_estimate} by
\begin{equation}\label{limsup2}
 ||R_{\tilde \varphi_k}^{-1} \left([d\tilde \varphi_k^\top]^{-1} \Omega(v_k) [d\tilde \varphi_k]^{-1}\right)||_{s-1} \leq C_2 ||\Omega(v_k)||_{s-1} \leq C_2 C' ||v_k||_s
\end{equation}
which by \eqref{v_k_in_ball} goes to $0$ for $k \to \infty$. For the first two terms on the right-hand side of the inequality \eqref{limsup1} we have again by \eqref{vort_estimate}
\begin{equation}\label{limsup3}
||R_{\varphi_k}^{-1}\left([d\varphi_k^\top]^{-1} \Omega(w_k) [d\varphi_k]^{-1}\right)||_{s-1} \geq \frac{1}{C_2} ||\Omega(w_k)||_{s-1}
\end{equation}
and
\begin{equation}\label{limsup4}
 ||R_{\tilde \varphi_k}^{-1}\left([d\tilde \varphi_k^\top]^{-1} \Omega(w_k) [d\tilde \varphi_k]^{-1}\right)||_{s-1} \geq \frac{1}{C_2} ||\Omega(w_k)||_{s-1}.
\end{equation}
Combining \eqref{limsup1}-\eqref{limsup4}, the inequality \eqref{from_below} then leads to
\[
 \limsup_{k \geq 1} ||\Omega_{1,k}-\tilde \Omega_{1,k}||_{s-1} \geq \limsup_{k \geq 1} \frac{2}{C_2} ||\Omega(w_k)||_{s-1}.
\]
We will get the result by showing that $\limsup_{k \geq 1} ||\Omega(w_k)||_{s-1}$ is bounded away from $0$. In $H^s(\R^n;\R^n)$ the following norm
\[
 |||f|||_s := ||f||_{L^2} + ||df||_{s-1}
\]
is equivalent to the norm $||\cdot||_s$. In particular there exists $C_3 > 0$ so that for any $f \in H^s(\R^n;\R^n)$
\begin{equation}\label{norms_equiv}
\frac{1}{C_3} ||f||_s \leq |||f|||_s \leq C_3 ||f||_s.
\end{equation}
By \eqref{norms_equiv} we thus get $|||w_k|||_s \geq \frac{1}{C_3} \frac{R}{4}$ for all $k \geq 1$. By \eqref{rho_k_def} and \eqref{w_k_support}
\begin{multline}\label{l2_zero}
 ||w_k||_{L^2} \leq ||w_k||_{L^\infty} \operatorname{vol}\big(B_{\rho_k}(x^\ast)\big) \leq C ||w_k||_s \operatorname{vol}\big(B_{\rho_k}(x^\ast)\big) \\
  \leq C \frac{R}{4} \operatorname{vol}\big(B_1(0)\big) \left(\frac{MR}{16k}\right)^n.
\end{multline}
Hence $||w_k||_{L^2}$ goes to $0$ for $k \to \infty$ implying that
\[
 \limsup_{k \geq 1} ||d w_k||_{s-1} \geq \frac{1}{C_3} \frac{R}{4}.
\]
By Lemma \ref{lemma_biot_savart_estimate} 
\[
 \limsup_{k \geq 1} ||\Omega(w_k)||_{s-1} \geq \limsup_{k \geq 1} \frac{1}{C_4} ||d w_k||_{s-1} \geq \frac{1}{C_3 C_4} \frac{R}{4}
\]
for some constant $C_4 >0$. By \eqref{u_omega_ineq} we then conclude
\begin{equation}\label{indep_of_R}
 \limsup_{k \geq 1} ||\phi(u_{0,k})-\phi(\tilde u_{0,k})||_s \geq \limsup_{k \geq 1} \frac{1}{C'} ||\Omega_{1,k}-\tilde \Omega_{1,k}||_{s-1} \geq \frac{1}{4C_3 C_4} R
\end{equation} 
whereas $||u_{0,k} - \tilde u_{0,k}||_s \to 0$. As $(u_{0,k}),(\tilde u_{0,k})$ are in $B_R(u_0)$ this shows that $\phi$ is not uniformly continuous on $B_R(u_0)$.
\end{proof}

\noindent
Finally we can give the proof of Theorem \ref{th_nondifferentiable}

\begin{proof}[Proof of Theorem \ref{th_nondifferentiable}]
By \eqref{scaling} it suffices to consider the case $T=1$, i.e. to prove that
\[
 \phi:U \to H^s_\sigma(\R^n;\R^n)
\]
is nowhere differentiable. The key ingredient is inequality \eqref{indep_of_R}. Let us reformulate it in a convenient way. Let $w \in U$. Then by the last part of the proof of Proposition \ref{prop_nonuniform} there are $R_\ast,C_\ast > 0$ with $B_{R_\ast}(w) \subseteq U$ satisfying the following property: for any $0 < R \leq R_\ast$ there are sequences $(u_{0,k})_{k \geq 1},(\tilde u_{0,k})_{k \geq 1} \subseteq B_{R}(w)$ with
\begin{equation}\label{zero_seq}
\lim_{k \to \infty} ||u_{0,k}-\tilde u_{0,k}||_s =0
\end{equation}
and 
\begin{equation}\label{below_by_R}
||\phi(u_{0,k})-\phi(\tilde u_{0,k})||_s \geq C_\ast R,\qquad \forall k \geq 1.
\end{equation}
Assume now that $\phi$ is differentiable in $w$. For any $h \in H^s_\sigma(\R^n;\R^n)$ with $w+h \in B_{R_\ast}(w)$
\begin{equation}\label{diff_remainder}
\mathcal R(w,h):= \phi(w+h)-\phi(w)+d_w \phi(h).
\end{equation}
By the definition of differentiability there is $0 < R \leq R_\ast$ with
\begin{equation}\label{remainder_estimate}
||\mathcal R(w,h)||_s \leq \frac{C_\ast}{4} ||h||_s
\end{equation}
for any $h \in H^s_\sigma(\R^n;\R^n)$ with $||h||_s \leq R$. Take sequences $(u_{0,k})_{k \geq 1},(\tilde u_{0,k})_{k \geq 1} \subseteq B_R(w)$ satisfying \eqref{zero_seq}-\eqref{below_by_R}. We then get by \eqref{diff_remainder}
\[
 \phi(u_{0,k})=\phi(w+(u_{0,k}-w)) = \phi(w) + d_w \phi(u_{0,k}-w) + \mathcal R(w,u_{0,k}-w)
\]
and a similar expression for $\phi(\tilde u_{0,k})$. Hence
\[
 \phi(u_{0,k})-\phi(\tilde u_{0,k}) = d_w \phi (u_{0,k}-\tilde u_{0,k}) +\mathcal R(w,u_{0,k}-w) - \mathcal R(w,\tilde u_{0,k}-w).
\]
and thus by \eqref{zero_seq}, $||d_w \phi (u_{0,k}-\tilde u_{0,k})||_s \underset{k \to \infty}{\to} 0$ yielding
\begin{multline*}
 \limsup_{k \geq 1} ||\phi(u_{0,k})-\phi(\tilde u_{0,k})||_s \\
\leq \limsup_{k \geq 1} ||\mathcal R(w,u_{0,k}-w)||_s + \limsup_{k \geq 1} ||\mathcal R(w,\tilde u_{0,k}-w)||_s \leq \frac{C_\ast}{2} R
\end{multline*}
where the last inequality follows from \eqref{remainder_estimate}. This is a contradiction to \eqref{below_by_R}. Hence $\phi$ is not differentiable in $w$. As $w$ was arbitrary the claim follows.
\end{proof}

\bibliographystyle{plain}

\flushleft
\author{ Hasan Inci\\
         Institut f\"ur Mathematik\\        
         Universit\"at Z\"urich\\           
         Winterthurerstrasse 190\\          
         CH-8057 Z\"urich\\     
Schwitzerland\\
        {\it email: } {hasan.inci@math.uzh.ch}
}

\end{document}